\newtheorem{theorem}{Theorem}[section]
\newtheorem{lemma}[theorem]{Lemma}
\newtheorem{corollary}[theorem]{Corollary}
\numberwithin{equation}{section}
\newcommand{\R}{\mathbb{R}}
\newcommand{\Rn}{\R^n}
\newcommand{\il}{\Delta_{\infty}}
\newcommand{\ilN}{\Delta_{\infty}^N}
\newcommand{\ue}{u^{\epsilon}}
\newcommand{\Ve}{V^{\epsilon}}
\newcommand{\ze}{z^{\epsilon}}
\newcommand{\la}{\langle}
\newcommand{\ra}{\rangle}
\newcommand{\CcU}{C^{\infty}_0(U)}
\newcommand{\CcO}{C^{\infty}_0(\Omega)}
\newcommand{\Om}{\Omega}
\newcommand{\loc}{\textnormal{loc}}
\DeclareMathOperator{\tr}{tr\,}
\DeclareMathOperator{\diverg}{div}
\DeclareMathOperator{\spt}{spt\,}
\DeclareMathOperator{\osc}{osc\,}
\DeclareMathOperator{\dist}{dist\,}
\begin{document}
\title[An inequality and its application to the regularity of $p$-harmonic functions]{Note on an elementary inequality and its application to the regularity of $p$-harmonic functions}
\author{Saara Sarsa}
\address{Matematiikan ja tilastotieteen laitos, Helsingin yliopisto, PL 68 (Pietari Kalmin katu 5) 00014 Helsingin yliopisto, Helsinki, Finland}
\email[Saara Sarsa]{saara.sarsa@helsinki.fi}
\thanks{S. Sarsa is supported by the Academy of Finland, the Centre of Excellence in Analysis and Dynamics Research and the Academy of Finland, project 308759.}
\keywords{$p$-harmonic function, Sobolev regularity, elementary inequality} 
\begin{abstract}
We study the Sobolev regularity of $p$-harmonic functions. We show that $|Du|^{\frac{p-2+s}{2}}Du$ belongs to the Sobolev space $W^{1,2}_{\loc}$, $s>-1-\frac{p-1}{n-1}$, for any $p$-harmonic function $u$. The proof is based on an elementary inequality. 
\end{abstract}
\maketitle
\section{Introduction}

In \cite{Dong2020} Dong, Fa, Zhang and Zhou established the following inequality. Let $v$ be a smooth real-valued function defined on a domain $\Om\subset\Rn$, $n\geq2$. Let $Dv:=(v_{x_1},\ldots,v_{x_n})$ denote its gradient and $D^2v:=(v_{x_ix_j})_{i,j=1}^n$ its Hessian. The Laplacian of $v$ is denoted as 
$$ \Delta v:=\tr (D^2v)=\sum_{i=1}^nv_{x_ix_i} $$ 
and the infinity Laplacian of $v$ as 
$$ \il v:=\la Dv,D^2vDv\ra=\sum_{i,j=1}^nv_{x_i}v_{x_ix_j}v_{x_j}. $$
Then
\begin{equation} \label{eq:FundamentalInequalityinDong2019}
\Big| |D^2vDv|^2-\Delta v\il v -\frac{1}{2}\big(|D^2v|^2-(\Delta v)^2\Big)|Dv|^2\Big|
\leq \frac{n-2}{2}\big(|D^2v|^2|Dv|^2-|D^2vDv|^2\big)
\end{equation}
holds everywhere in $\Om$. The authors derived \eqref{eq:FundamentalInequalityinDong2019} as a direct consequence of the inequality
\begin{equation} \label{eq:VectorInequalityinDong2019}
\begin{aligned}
\Big| \sum_{i=1}^n(\lambda_ia_i)^2
-\Big(\sum_{i=1}^n\lambda_i\Big)\Big(\sum_{i=1}^n\lambda_ia_i^2\Big) -\frac{1}{2}\Big(|\lambda|^2-\Big(\sum_{i=1}^n\lambda_i\Big)^2\Big)\Big| \\
\leq \frac{n-2}{2}\Big(|\lambda|^2-\sum_{i=1}^n(\lambda_ia_i)^2\Big)
\end{aligned}
\end{equation}
that holds for any vectors $\lambda=(\lambda_1,\ldots,\lambda_n)\in\Rn$ and $a=(a_1,\ldots,a_n)\in\Rn$ such that $|a|=1$. For the proof of \eqref{eq:VectorInequalityinDong2019}, see the proof of Lemma 2.2 in \cite{Dong2020}. The inequality \eqref{eq:FundamentalInequalityinDong2019} is applied to study the regularity of solutions to $p$-Laplacian equation (see the equation \eqref{eq:pLaplacian} below) and its parabolic counterparts. For further details, we refer the reader to Theorems 1.1, 1.3 and 1.5 in \cite{Dong2020}. 

The inequality \eqref{eq:FundamentalInequalityinDong2019} in the case $n=2$ (when it is sharp) has been used to prove Sobolev regularity for planar infinity harmonic functions, see \cite{Koch2019}. See also \cite{Lindgren2020}.

In this paper we show that \eqref{eq:FundamentalInequalityinDong2019} can be derived as a consequence of another elementary inequality that has been used before by Colding \cite{Colding2012} to prove monotonicity formulas for solutions to certain elliptic partial differential equations. See for instance the proof of Theorem 2.4 in \cite{Colding2012}. This elementary inequality says that for any symmetric matrix $A\in\R^{n\times n}$ and for any vector $e\in\R$ we have
\begin{equation} \label{eq:MatrixInequalityIntro}
|e|^4|A|^2
\geq 2|e|^2|Ae|^2+\frac{\big(|e|^2\tr(A)-\la e,Ae\ra\big)^2}{n-1}
-\la e,Ae\ra^2.
\end{equation}
If $n=2$, we have equality instead of inequality in \eqref{eq:MatrixInequalityIntro}.

For a smooth function $v$, we apply the inequality \eqref{eq:MatrixInequalityIntro} with $A=D^2v$ and $e=Dv$ to obtain a lower bound for the Hilbert-Schmidt norm of the Hessian $D^2v$ with respect to the gradient $Dv$. More precisely, we obtain
\begin{equation} \label{eq:OurVersionofFundamentalInequalityIntro}
|Dv|^4|D^2v|^2
\geq 2|Dv|^2|D^2vDv|^2+\frac{\big(|Dv|^2\Delta v-\il v\big)^2}{n-1}
-(\il v)^2.
\end{equation}
The main point is that \eqref{eq:OurVersionofFundamentalInequalityIntro} implies \eqref{eq:FundamentalInequalityinDong2019} but not vice versa, apart from the case $n=2$ where both inequalities reduce to equality.  See Section 2 for details.
Consequently, we are able to improve Theorem 1.1 in \cite{Dong2020}, which concerns regularity of $p$-harmonic functions. 

Let $1<p<\infty$. A function $u\in W^{1,p}(\Om)$ is called $p$-harmonic, if it solves the $p$-Laplacian equation
\begin{equation} \label{eq:pLaplacian}
\Delta_p u :=\diverg\big(|Du|^{p-2}Du\big)=0
\end{equation}
in the weak sense, that is, if
$$ \int_{\Omega}|Du|^{p-2}\la Du,D\varphi\ra dx = 0 $$
for all $\varphi\in\CcO$.

Let $u$ denote a $p$-harmonic function in $\Om\subset\Rn$, $n\geq 2$. For $s\in\R$, we define the vector field $V_s\colon\Rn\to\Rn$ as
\begin{equation}
V_s(z):=
    \begin{cases}
    |z|^{\frac{p-2+s}{2}}z \quad &\text{for }z\in \Rn\setminus\{0\}; \\
    0 \quad&\text{for }z=0.
    \end{cases}
\end{equation}
We study the Sobolev regularity of the vector field $V_s(Du)\colon\Om\to\Rn$.
The letter $V$ refers to the notation used in \cites{Hamburger1992,Mingione2007,Avelin2018}.
The subscript $s$ is a perturbation parameter that describes the deviation from the ''natural'' vector field $V(Du):=V_0(Du)$. We may call the vector field $V(Du)$ ''natural'' in this setting, because its Sobolev regularity arises more naturally than the one of the gradient $Du$ alone. See for instance Proposition 2 in \cite{Bojarski1987}, where the authors apply the difference quotient characterization of Sobolev functions to show that $V(Du)\in W^{1,2}_{\loc}(\Om)$. 
For similar results, see for instance \cite{Uhlenbeck1977}*{Lemma 3.1}, \cite{Giusti2003}*{Remark 8.4} and \cite{Mingione2007}*{Lemma 3.2}.

In fact, on the contrary to the $W^{1,2}_{\loc}$-regularity of $V(Du)$, it is not certain if the weak Hessian $D^2u$ necessarily exists. Manfredi and Weitsman have shown in \cite{Manfredi1988}*{Lemma 5.1} that $p$-harmonic functions belong to $W^{2,2}_{\loc}$, provided that $1<p<3+\frac{2}{n-2}$. This restriction for the range of $p$ arises from so-called Cordes condition \cite{Cordes1961}.

In this paper we are interested in the $W^{1,2}_{\loc}$-regularity of $V_s(Du)$ when $s\neq 0$. Dong, Fa, Zhang and Zhou apply \eqref{eq:FundamentalInequalityinDong2019} to prove that $V_s(Du)\in W^{1,2}_{\loc}$ whenever
\begin{equation} \label{eq:RestrictioninDong2019}
   s > 2 - \min\Big\{p+\frac{n}{n-1},3+\frac{p-1}{n-1}\Big\}, 
\end{equation}
see \cite{Dong2020}*{Theorem 1.1}. We improve this bound to
\begin{equation} \label{eq:RestrictionOurs}
   s > -1-\frac{p-1}{n-1}. 
\end{equation}
In other words, we show that the condition $s>2-p-\frac{n}{n-1}$ is redundant and obtain nontrivial improvement in the case $1<p<2$ and $n\geq 3$.

The following theorem is an application of \eqref{eq:OurVersionofFundamentalInequalityIntro} and the main result of this paper. In the statement of the theorem, and throughout the paper, a generic ball in $\Rn$ with radius $r>0$ is denoted briefly as $B_r$.

\begin{theorem} \label{thm:OscillationEstimateforDVs}
Let $n\geq 2$, $1<p<\infty$, and $s>-1-\frac{p-1}{n-1}$. 
If $u$ is $p$-harmonic in $\Omega\subset\Rn$, then $V_s(Du)\in W^{1,2}_{\loc}(\Omega)$. 
Moreover, there exists a constant $C=C(n,p,s)>0$ such that
\begin{equation} \label{eq:OscillationEstimateforDVs}
\int_{B_r}|D(V_s(Du))|^2dx
\leq \frac{C}{r^2}\int_{B_{2r}}|V_s(Du)-z|^2dx
\end{equation}
for all vectors $z\in\Rn$ and all concentric balls $B_r\subset B_{2r}\subset\subset \Omega$.
\end{theorem}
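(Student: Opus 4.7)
The plan is to prove \eqref{eq:OscillationEstimateforDVs} first for smooth solutions of the non-degenerate regularized equation $\diverg((|D\ue|^2+\epsilon)^{(p-2)/2}D\ue)=0$ with $V_s$ replaced by $\Ve_s(z):=(|z|^2+\epsilon)^{(p-2+s)/4}z$ and a constant independent of $\epsilon$, and then pass to the limit $\epsilon\to 0$ using standard $C^{1,\alpha}_{\loc}$-convergence $\ue\to u$. For smooth $\ue$, differentiating the regularized equation in $x_k$ shows that $\ue_{x_k}$ satisfies the linear divergence-form equation $\partial_i(a^\epsilon_{ij}\partial_j\ue_{x_k})=0$ with
$$a^\epsilon_{ij}=(|D\ue|^2+\epsilon)^{(p-4)/2}\bigl[(|D\ue|^2+\epsilon)\delta_{ij}+(p-2)\ue_{x_i}\ue_{x_j}\bigr].$$
Fix a cutoff $\eta\in\CcO$ with $\eta\equiv 1$ on $B_r$, $\spt\eta\subset B_{2r}$ and $|D\eta|\leq C/r$, and a constant vector $z\in\Rn$; testing the $k$-th differentiated equation against $\eta^2(\Ve_s(D\ue)_k-z_k)$, integrating by parts and summing over $k$ yields an identity
$$ \int\eta^2 Q^\epsilon\,dx=-2\int\eta\sum_{i,j,k}(\partial_i\eta)\,a^\epsilon_{ij}\,\ue_{x_jx_k}\bigl(\Ve_s(D\ue)_k-z_k\bigr)\,dx, $$
where $Q^\epsilon$ is a pointwise quadratic form in $D^2\ue$ built from $|D^2\ue|^2$, $|D^2\ue D\ue|^2$ and $(\il\ue)^2$, weighted by powers of $|D\ue|^2+\epsilon$ determined by $n$, $p$ and $s$.

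The crucial step is to establish a pointwise bound $Q^\epsilon\geq\delta\,|D(\Ve_s(D\ue))|^2$ for some $\delta=\delta(n,p,s)>0$. The regularized $p$-harmonic identity $(|D\ue|^2+\epsilon)\Delta\ue+(p-2)\il\ue=0$ combined with \eqref{eq:OurVersionofFundamentalInequalityIntro} applied to $A=D^2\ue$, $e=D\ue$ gives (up to $\epsilon$-corrections) the pointwise bound
$$ |D\ue|^4|D^2\ue|^2\geq 2|D\ue|^2|D^2\ue D\ue|^2+\frac{(p-1)^2-(n-1)}{n-1}\,(\il\ue)^2. $$
Inserting this into $Q^\epsilon$ and collecting coefficients, a direct computation shows that $\delta>0$ precisely when $s>-1-(p-1)/(n-1)$. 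Repeating the same argument with the weaker inequality \eqref{eq:FundamentalInequalityinDong2019} in place of \eqref{eq:OurVersionofFundamentalInequalityIntro} reproduces only the restriction \eqref{eq:RestrictioninDong2019}, which is precisely where the improvement comes from.

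Granted the positivity, bounding the right-hand side of the identity by Cauchy--Schwarz and absorbing the quadratic part into the left-hand side produces the Caccioppoli estimate for $\ue$ uniformly in $\epsilon$. Passing $\epsilon\to 0$ via $C^{1,\alpha}_{\loc}$-convergence and lower semicontinuity of the $L^2$-norm then finishes the proof and simultaneously establishes $V_s(Du)\in W^{1,2}_{\loc}(\Omega)$. The main obstacle is the algebraic positivity step: identifying the precise combination of the coefficients of $Q^\epsilon$ with the strengthened lower bound coming from \eqref{eq:OurVersionofFundamentalInequalityIntro} that yields the sharp threshold $s>-1-(p-1)/(n-1)$.
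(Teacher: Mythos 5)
Your outline correctly identifies the algebraic inequality (Corollary~\ref{cor:OurVersionofFundamentalInequality} combined with the non-divergence form $(|D\ue|^2+\epsilon)\Delta\ue+(p-2)\il\ue=0$) as the source of the sharp threshold $s>-1-\frac{p-1}{n-1}$, and the overall strategy (uniform estimates for the regularized problem, then $\epsilon\to 0$) is right. But the central step of your proposal cannot work as stated, and the full theorem actually requires an extra argument you have omitted.

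The weight mismatch kills the pointwise bound. With your test function $\eta^2\bigl(\Ve_s(D\ue)_k-z_k\bigr)$, the left-hand integrand is $Q^\epsilon=\sum_{i,j,k}a^\epsilon_{ij}\ue_{x_jx_k}\,\partial_i\bigl(\Ve_s(D\ue)_k\bigr)$. Since $a^\epsilon_{ij}=\mu^{p-2}A_{ij}$ and $\partial_i\bigl(\Ve_s(D\ue)_k\bigr)=\mu^{\frac{p-2+s}{2}}\bigl(\ue_{x_ix_k}+\tfrac{p-2+s}{2\mu^2}\ue_{x_k}(D^2\ue D\ue)_i\bigr)$, the form $Q^\epsilon$ carries the overall weight $\mu^{p-2+\frac{p-2+s}{2}}$, whereas $|D(\Ve_s(D\ue))|^2$ carries the weight $\mu^{p-2+s}$. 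The ratio is $\mu^{\frac{p-2-s}{2}}$, which is unbounded above or below unless $s=p-2$. Hence a pointwise inequality $Q^\epsilon\geq\delta\,|D(\Ve_s(D\ue))|^2$ with $\delta=\delta(n,p,s)>0$ is impossible in general, and the same mismatch reappears on the right-hand side after Young's inequality, where the residual weight $\mu^{\frac{p-2-s}{2}}$ prevents you from reaching the clean quantity $\int|D\eta|^2\,|\Ve_s(D\ue)-z|^2\,dx$. The paper avoids this by testing with $\phi^2\mu^s\ue_{x_k}$ (Lemma~\ref{lem:CaccioppoliEstimate}), whose $\mu$-power is tuned so that all terms in the resulting identity carry the common weight $\mu^{p-2+s}$; it is with this identity, combined with Lemma~\ref{lem:RegularizedLowerBoundforHessian}, that the coefficient computation yields $a=(p-1)\bigl(s+1+\tfrac{p-1}{n-1}\bigr)-3\eta>0$, hence the threshold.

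Even after fixing the test function, your one-shot proof of the full oscillation estimate is missing a whole step. The weight-balanced test function $\phi^2\mu^s\ue_{x_k}$ is forced to put the uncontrollable $z$-offset in the wrong variable (it offsets $\mu^s D\ue$, not $\Ve_s(D\ue)$), so this route only yields the case $z=0$ (Theorem~\ref{thm:Application}). The paper's proof of Theorem~\ref{thm:OscillationEstimateforDVs} then proceeds by a dichotomy (Section~4): degenerate balls, where \eqref{eq:DegenracyCondition} lets the $z=0$ estimate dominate; and non-degenerate balls, where the $C^{1,\alpha}_{\loc}$ bound (Theorem~\ref{thm:RegularityofpHarmonicFunctions}) together with Lemma~\ref{lem:ChangeofScale} show $\mu\sim\lambda$ throughout $B_{2r}$, so the elementary Caccioppoli estimate (Lemma~\ref{lem:CaccioppoliVeryBasic}) with arbitrary $z$ and the algebraic comparison (Lemma~\ref{lem:AlgebraicIdentity}) give the claim under the weaker condition $s>-p$. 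This second regime is where the offset $z\in\Rn$ is actually handled, and it does not follow from any pointwise inequality. Your proposal collapses this two-case structure into a single test function, which is precisely what the weight mismatch forbids.
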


Proof of Theorem \ref{thm:OscillationEstimateforDVs} follows from establishing the case $z=0$ in Section 3 and applying known results of $p$-harmonic functions in Section 4. Note that the right hand side of \eqref{eq:OscillationEstimateforDVs} is finite due to the well-known $C^{1,\alpha}_{\loc}$-regularity of $p$-harmonic functions for some $\alpha=\alpha(n,p)\in(0,1)$. For this classical result, we refer the reader to \cites{Uraltseva1968,Uhlenbeck1977,Evans1982,DiBenedetto1983,Lewis1983,Tolksdorf1984}. For results concerning optimal regularity of $p$-harmonic functions, see \cite{Lewis1980} and \cites{Bojarski1987,Iwaniec1989}.

Using Sobolev-Poincar\'e inequality and Gehring's Lemma \cite{Gehring1973} with the estimate \eqref{thm:OscillationEstimateforDVs} leads to a higher integrability result for $D(V_s(Du))$. Here and subsequently, we denote the integral average of a locally integrable function $v$ as
$$ (v)_{B_r}:=\fint_{B_r}vdx=\frac{1}{|B_r|}\int_{B_r}vdx. $$

\begin{corollary} \label{cor:CorollaryduetoGehring}
Under the same hypothesis as Theorem \ref{thm:OscillationEstimateforDVs}, 
there exists a constant $\delta=\delta(n,p,s)>0$ such that $D(V_s(Du))\in L^q_{\loc}(\Om)$ for every $1\leq q<2+\delta$. Moreover, there exists a constant $C=C(n,p,s,q)>0$ such that
\begin{equation} \label{eq:CorollaryduetoGehring}
\Big(\fint_{B_r}|D(V_s(Du))|^qdx\Big)^{1/q}
\leq C\Big(\fint_{B_{2r}}|D(V_s(Du))|^2dx\Big)^{1/2}
\end{equation}
for all concentric balls $B_r\subset B_{2r}\subset\subset \Omega$.
\end{corollary}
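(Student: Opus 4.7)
The plan is to combine the $W^{1,2}_{\loc}$-estimate of Theorem \ref{thm:OscillationEstimateforDVs} with a Sobolev-Poincar\'e inequality to produce a reverse H\"older inequality for $|D(V_s(Du))|$, and then to invoke Gehring's lemma to obtain the self-improved integrability. This is the classical ``Caccioppoli plus Sobolev-Poincar\'e plus Gehring'' scheme, and since the analytic work is already carried by Theorem \ref{thm:OscillationEstimateforDVs}, the remaining steps should be essentially routine.

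First I would apply Theorem \ref{thm:OscillationEstimateforDVs} with the specific choice $z=(V_s(Du))_{B_{2r}}$, obtaining
\[
\int_{B_r}|D(V_s(Du))|^2dx \leq \frac{C}{r^2}\int_{B_{2r}}\big|V_s(Du)-(V_s(Du))_{B_{2r}}\big|^2dx.
\]
Since $V_s(Du)\in W^{1,2}_{\loc}(\Omega)$ by Theorem \ref{thm:OscillationEstimateforDVs}, the Sobolev-Poincar\'e inequality with exponent $2_{\ast}:=\frac{2n}{n+2}$ (which equals $1$ when $n=2$ and lies in $(1,2)$ when $n\geq 3$) then yields
\[
\Big(\fint_{B_{2r}}\big|V_s(Du)-(V_s(Du))_{B_{2r}}\big|^2dx\Big)^{1/2} \leq Cr\Big(\fint_{B_{2r}}|D(V_s(Du))|^{2_{\ast}}dx\Big)^{1/2_{\ast}}.
\]
Combining the two displays and absorbing the factor $|B_{2r}|/|B_r|$ into the constant would yield the reverse H\"older inequality
\[
\Big(\fint_{B_r}|D(V_s(Du))|^2dx\Big)^{1/2} \leq C\Big(\fint_{B_{2r}}|D(V_s(Du))|^{2_{\ast}}dx\Big)^{1/2_{\ast}}
\]
on every pair of concentric balls $B_r\subset B_{2r}\subset\subset\Omega$, with the right-hand integrability exponent strictly less than the left-hand one.

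At this point I would directly invoke Gehring's lemma \cite{Gehring1973}, whose standard localized form for reverse H\"older inequalities on Euclidean balls produces a constant $\delta=\delta(n,p,s)>0$ such that $|D(V_s(Du))|\in L^{2+\delta}_{\loc}(\Omega)$ together with the quantitative bound \eqref{eq:CorollaryduetoGehring} for every $1\leq q<2+\delta$. The only minor subtlety is the two-dimensional case, where the Sobolev-Poincar\'e step must use the endpoint exponent $2_{\ast}=1$; but this is permissible in the standard Sobolev-Poincar\'e inequality for $W^{1,1}$ functions in the plane, so no genuine obstacle arises. The hard work having been done in Theorem \ref{thm:OscillationEstimateforDVs}, this corollary is essentially a bookkeeping exercise.
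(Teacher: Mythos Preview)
Your proposal is correct and follows essentially the same route as the paper's proof: apply Theorem \ref{thm:OscillationEstimateforDVs} with $z=(V_s(Du))_{B_{2r}}$, chain it with the Sobolev--Poincar\'e inequality at exponent $\frac{2n}{n+2}$ to obtain a reverse H\"older inequality, and invoke Gehring's lemma. Your remark on the endpoint exponent when $n=2$ is an additional (harmless) detail not spelled out in the paper.
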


\begin{proof}
Combination of Sobolev-Poincare inequality and \eqref{eq:OscillationEstimateforDVs} with $z=\big(V_s(Du)\big)_{B_{2r}}$ yields
\begin{align*}
\Big(\fint_{B_r}|D(V_s(Du))|^2dx\Big)^{1/2}
&\leq \frac{C}{r}\Big(\fint_{B_{2r}}|V_s(Du)-\big(V_s(Du)\big)_{B_{2r}}|^2dx\Big)^{1/2} \\
&\leq C\Big(\fint_{B_{2r}}|D(V_s(Du))|^{\frac{2n}{n+2}}dx\Big)^{\frac{n+2}{2n}}.
\end{align*}
Now Gehring's Lemma is applicable. The estimate \eqref{eq:CorollaryduetoGehring} follows immediately.
\end{proof}

We finish the introduction by mentioning some interesting values of the parameter $s$. If $1<p<3+\frac{n}{n-2}$, then we can select $s=2-p$. This reproves the $W^{2,2}_{\loc}$-regularity of $p$-harmonic functions discussed above. The same conclusion can be drawn also from the stronger restriction \eqref{eq:RestrictioninDong2019} due to Dong, Fa, Zhang and Zhou. Our weakening \eqref{eq:RestrictionOurs} allows us to select $s=p-2$, which reproves the known $W^{1,2}_{\loc}$-regularity of the weakly divergence free vector field $|Du|^{p-2}Du$, see \cites{Damascelli2004,Lou2008} and \cite{Avelin2018}*{Theorem 4.1}.

\section{An elementary inequality}
In this section we explain in detail how we improve the inequality \eqref{eq:FundamentalInequalityinDong2019}.

\begin{lemma} \label{lem:MatrixInequality}
Let $A\in\R^{n\times n}$, $n\geq 2$, be a symmetric matrix and $e\in\Rn$ a vector. Then we have
\begin{equation} \label{eq:MatrixInequality}
|e|^4|A|^2
\geq 2|e|^2|Ae|^2+\frac{\big(|e|^2\tr(A)-\la e,Ae\ra\big)^2}{n-1}
-\la e,Ae\ra^2.
\end{equation}
If $n=2$, equality holds in place of the inequality in \eqref{eq:MatrixInequality}.
\end{lemma}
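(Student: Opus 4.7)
The plan is to reduce \eqref{eq:MatrixInequality} to the Cauchy--Schwarz inequality applied to an $(n-1)\times(n-1)$ block of $A$.

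First, I would dispose of the trivial case $e=0$ and then use homogeneity: every term in \eqref{eq:MatrixInequality} is a polynomial of degree four in $e$ and degree two in $A$, so I may assume $|e|=1$. Next, I would use an orthogonal change of basis. Each of the scalars $|e|$, $|A|^2=\tr(A^T A)$, $\tr(A)$, $\la e,Ae\ra$ and $|Ae|^2$ is invariant under the transformation $e\mapsto Q^T e$, $A\mapsto Q^T A Q$ with $Q$ orthogonal, and symmetry of $A$ is preserved. Choosing $Q$ so that $Q^T e=e_1$, I reduce to the case $e=e_1$.

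Once $e=e_1$ and $|e|=1$, I would write $A$ in block form as
\begin{equation*}
A=\begin{pmatrix} a & c^T \\ c & B \end{pmatrix},
\end{equation*}
where $a=A_{11}\in\R$, $c=(A_{21},\ldots,A_{n1})^T\in\R^{n-1}$, and $B$ is the symmetric $(n-1)\times(n-1)$ lower-right block. Then
$|A|^2=a^2+2|c|^2+|B|^2$, $|Ae|^2=a^2+|c|^2$, $\la e,Ae\ra=a$, and $\tr(A)=a+\tr(B)$. Substituting into \eqref{eq:MatrixInequality}, the $a^2$ and $2|c|^2$ terms cancel on both sides, and the whole inequality collapses to
\begin{equation*}
|B|^2\geq \frac{\big(\tr(B)\big)^2}{n-1}.
\end{equation*}

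This last estimate is immediate from Cauchy--Schwarz: $(\tr B)^2=\big(\sum_{i=1}^{n-1}B_{ii}\big)^2\leq (n-1)\sum_{i=1}^{n-1}B_{ii}^2\leq (n-1)|B|^2$. When $n=2$ the block $B$ is a scalar, so $|B|^2=(\tr B)^2$ and every step above is an equality; this gives the sharpness claim. I do not anticipate a genuine obstacle here: the main insight is simply recognising that, after the block decomposition adapted to $e$, the inequality is precisely Cauchy--Schwarz on the orthogonal complement of $e$, which is also why the bound $1/(n-1)$ rather than $1/n$ appears.
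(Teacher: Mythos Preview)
Your proof is correct and follows essentially the same approach as the paper: reduce to $|e|=1$ by homogeneity, rotate so that $e$ is a coordinate vector, block-decompose $A$ accordingly, and observe that the inequality collapses to $|B|^2\geq(\tr B)^2/(n-1)$ on the $(n-1)\times(n-1)$ block orthogonal to $e$. The paper phrases this last step as Pythagoras's theorem in the Hilbert--Schmidt inner product (splitting $B$ into its trace part and its traceless part), which is just your Cauchy--Schwarz inequality in another guise.
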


\begin{proof}
If $e=0$, then \eqref{eq:MatrixInequality} is trivially true, thus we prove \eqref{eq:MatrixInequality} for $e\neq 0$. Since \eqref{eq:MatrixInequality} is homogeneous, we may assume without loss of generality that $|e|=1$. We fix an orthogonal coordinate system $\{e_1,\ldots,e_n\}$ in $\Rn$, such that $e_n=e$. Let $O:=(e_1,\ldots,e_n)$ be the corresponding orthogonal rotation matrix, where $e_1,\ldots,e_n$ are interpreted as column vectors.

Denote $B:=O^{\intercal}AO=\big(\la e_i,Ae_j\ra\big)_{i,j=1}^n$. Let $B_{n-1}:=(B_{ij})_{i,j=1}^{n-1}$ be the submatrix given by the first $n-1$ rows and $n-1$ columns of $B$. We may decompose
\begin{equation} \label{eq:BasicDecompositionofA}
|B|^2
=|B_{n-1}|^2+2\sum_{i=1}^{n-1}\la e_i,Ae_n\ra^2 +\la e_n,Ae_n\ra^2.
\end{equation}
Consider the submatrix $B_{n-1}$ as an element of the Hilbert space $\R^{(n-1)\times(n-1)}$ with the Hilbert-Schmidt matrix inner product.
Apply Pythagoras's theorem to obtain
\begin{equation} \label{eq:EstimateofSubmatrixofA} 
\begin{aligned} 
|B_{n-1}|^2
&=\frac{(\tr(B_{n-1}))^2}{n-1}+\Big|B_{n-1}
-\frac{\tr(B_{n-1})}{n-1}I\Big|^2 \\
&\geq\frac{(\tr(B)-\la e_n,Ae_n\ra)^2}{n-1},
\end{aligned}
\end{equation}
where $I$ stands for the identity matrix in $\R^{(n-1)\times(n-1)}$. 
Note that if $n=2$, we have equality in place of inequality in the above display \eqref{eq:EstimateofSubmatrixofA}. 
Rewrite the middle term on the right hand side of \eqref{eq:BasicDecompositionofA} as
\begin{align} \label{eq:MiddleTermofARewritten}
2\sum_{i=1}^{n-1}\la e_i,Ae_n\ra^2
&=2|Ae_n|^2-2\la e_n,Ae_n\ra^2.
\end{align}
As we plug \eqref{eq:EstimateofSubmatrixofA} and \eqref{eq:MiddleTermofARewritten} into \eqref{eq:BasicDecompositionofA}, we obtain
\begin{align*}
|B|^2
&\geq\frac{(\tr(B)-\la e_n,Ae_n \ra)^2}{n-1}+2|Ae_n|^2
-\la e_n,Ae_n\ra^2.
\end{align*}
The desired estimate now follows, since by the cyclic property of trace we have $\tr(B)=\tr(A)$, and
$|B|^2=\tr(B^{\intercal}B)=\tr(A^{\intercal}A)=|A|^2$.
\end{proof}

\begin{corollary} \label{cor:OurVersionofFundamentalInequality}
If $v$ is a smooth function in a domain $\Omega\subset\Rn$, $n\geq 2$, then we have
\begin{equation} \label{eq:OurVersionofFundamentalInequality}
|Dv|^4|D^2v|^2
\geq 2|Dv|^2|D^2vDv|^2+\frac{\big(|Dv|^2\Delta v-\il v\big)^2}{n-1}
-(\il v)^2.
\end{equation}
everywhere in $\Omega$. If $n=2$, equality holds in the place of the inequality in \eqref{eq:OurVersionofFundamentalInequality}.
\end{corollary}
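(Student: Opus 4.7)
The plan is to obtain this corollary as an immediate pointwise application of Lemma \ref{lem:MatrixInequality}. At every point $x \in \Omega$, since $v$ is smooth, Schwarz's theorem guarantees that $D^2v(x) \in \R^{n\times n}$ is symmetric, so the hypotheses of the lemma are satisfied with the choice $A := D^2v(x)$ and $e := Dv(x)$. I would then simply read off the resulting inequality and verify that each term in \eqref{eq:MatrixInequality} matches the corresponding term in \eqref{eq:OurVersionofFundamentalInequality}.

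The substitutions are term by term. The factors $|e|^4|A|^2 = |Dv|^4|D^2v|^2$ and $|e|^2|Ae|^2 = |Dv|^2|D^2vDv|^2$ are immediate. For the middle and last terms, $\tr(A) = \tr(D^2v) = \Delta v$ by definition, and
\[
\la e, Ae\ra = \la Dv, D^2v\, Dv\ra = \il v,
\]
so that $|e|^2\tr(A) - \la e, Ae\ra = |Dv|^2\Delta v - \il v$ and $\la e, Ae\ra^2 = (\il v)^2$. Plugging these identifications into \eqref{eq:MatrixInequality} produces exactly \eqref{eq:OurVersionofFundamentalInequality}, valid at the arbitrary point $x$.

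Since the argument is purely pointwise and the substitutions are exact, the equality clause for $n=2$ transfers automatically from the corresponding clause in Lemma \ref{lem:MatrixInequality}. There is really no obstacle to overcome at this stage: all of the content of the corollary is already encoded in the lemma, and the remaining work is bookkeeping together with the observation that the Hessian of a smooth function is symmetric so that the lemma applies.
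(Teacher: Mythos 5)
Your proof is correct and is exactly the paper's argument: the paper's proof of this corollary is the one-line statement ``Let $A=D^2v$ and $e=Dv$ in \eqref{eq:MatrixInequality},'' and you have simply spelled out the term-by-term identifications (including the symmetry of $D^2v$ and the transfer of the $n=2$ equality case) that the paper leaves implicit.
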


\begin{proof}
Let $A=D^2v$ and $e=Dv$ in \eqref{eq:MatrixInequality}.
\end{proof}

\subsection{Comparison between Corollary \ref{cor:OurVersionofFundamentalInequality} and the inequality (\ref{eq:FundamentalInequalityinDong2019})}
We rewrite the two inequalities given by \eqref{eq:FundamentalInequalityinDong2019} as two lower bounds for the quantity $|Dv|^2|D^2v|^2$. Thus \eqref{eq:FundamentalInequalityinDong2019} is equivalent with the two inequalities
\begin{equation} \label{eq:FundamentalInequalityinDong2019LHS}
(n-3)|Dv|^2|D^2v|^2
\geq (n-4)|D^2vDv|^2-|Dv|^2(\Delta v)^2+2\Delta v\il v 
\end{equation}
and
\begin{equation} \label{eq:FundamentalInequalityinDong2019RHS}
|Dv|^2|D^2v|^2
\geq \frac{n}{n-1}|D^2vDv|^2
+\frac{1}{n-1}|Dv|^2(\Delta v)^2-\frac{2}{n-1}\Delta v\il v.
\end{equation}
It is easy to show that the bound \eqref{eq:FundamentalInequalityinDong2019LHS} is trivial. We now compare \eqref{eq:FundamentalInequalityinDong2019RHS} with \eqref{eq:OurVersionofFundamentalInequality}, and show that \eqref{eq:OurVersionofFundamentalInequality} is slightly sharper. Namely, we rewrite \eqref{eq:OurVersionofFundamentalInequality} as
\begin{align*}
|Dv|^2|D^2v|^2 \geq
&\,\,\frac{n}{n-1}|D^2vDv|^2+\frac{1}{n-1}|Dv|^2(\Delta v)^2
-\frac{2}{n-1}\Delta v\il v \\
&+\frac{n-2}{n-1}\Big(|D^2vDv|^2-\frac{(\il v)^2}{|Dv|^2}\Big).
\end{align*}
By Cauchy-Schwartz inequality
$$ (\il v)^2 = \la Dv,D^2vDv\ra^2 \leq |Dv|^2|D^2vDv|^2. $$
Hence \eqref{eq:OurVersionofFundamentalInequality} implies \eqref{eq:FundamentalInequalityinDong2019}.

\section{Application of the inequality}

The following Theorem is an improved version of Theorem 1.1 in \cite{Dong2020}.

\begin{theorem} \label{thm:Application}
Let $n\geq 2$, $1<p<\infty$ and $s>-1-\frac{p-1}{n-1}$. If $u$ is $p$-harmonic in $\Omega\subset\Rn$, then $V_s(Du)\in W^{1,2}_{\loc}(\Omega)$. 
Moreover, there exists a constant $C=C(n,p,s)>0$ such that
\begin{equation} \label{eq:QuantitativeBoundforSecondDerivatives}
\int_{B_r}|D(V_s(Du))|^2dx
\leq \frac{C}{r^2}\int_{B_{2r}}|V_s(Du)|^2dx
\end{equation}
for any concentric balls $B_r\subset B_{2r}\subset\subset\Om$.
\end{theorem}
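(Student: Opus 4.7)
The plan is to first prove \eqref{eq:QuantitativeBoundforSecondDerivatives} for smooth solutions of a nondegenerate regularization and then let the regularization parameter tend to zero. For $\epsilon>0$, let $\ue\in C^{\infty}(B_{2r})\cap C(\overline{B_{2r}})$ solve the nondegenerate equation $\diverg\bigl((|D\ue|^2+\epsilon)^{(p-2)/2}D\ue\bigr)=0$ in $B_{2r}$ with $\ue=u$ on $\partial B_{2r}$. Expanding the divergence yields the pointwise identity
\begin{equation*}
(|D\ue|^2+\epsilon)\Delta \ue+(p-2)\il \ue=0,
\end{equation*}
and by standard regularity theory for nondegenerate quasilinear equations, $\ue\to u$ in $C^{1,\alpha}_{\loc}(B_{2r})$ as $\epsilon\to 0^+$.

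The heart of the argument is a Caccioppoli-type computation. I would differentiate the regularized equation in the direction $x_k$, multiply by $(|D\ue|^2+\epsilon)^{s/2}\ue_{x_k}\eta^2$ where $\eta$ is a smooth cutoff with $\eta\equiv 1$ on $B_r$, $\spt\eta\subset B_{2r}$ and $|D\eta|\leq 2/r$, sum over $k$, and integrate by parts. Writing $a_\epsilon:=|D\ue|^2+\epsilon$ for brevity, a direct computation produces an identity of the schematic form
\begin{equation*}
\int \eta^2 \bigl[a_\epsilon^{(p-2+s)/2}|D^2\ue|^2 + (p-2+s)a_\epsilon^{(p-4+s)/2}|D^2\ue D\ue|^2 + s(p-2)a_\epsilon^{(p-6+s)/2}(\il\ue)^2\bigr]\,dx = \mathcal{B},
\end{equation*}
where $\mathcal{B}$ collects the boundary terms generated by $D\eta$. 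Young's inequality controls $\mathcal{B}$ by $\frac{C}{r^2}\int_{B_{2r}}a_\epsilon^{(p+s)/2}\,dx$, up to a term that can be absorbed into the left hand side.

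The main obstacle is to extract a strictly positive multiple of $\int \eta^2 a_\epsilon^{(p-2+s)/2}|D^2\ue|^2\,dx$ from the displayed identity, because when $s$ or $p-2$ has the wrong sign, some of the coefficients on the left become negative. This is where Corollary~\ref{cor:OurVersionofFundamentalInequality} enters: applying \eqref{eq:OurVersionofFundamentalInequality} with $A=D^2\ue$ and $e=D\ue$, and substituting $(|D\ue|^2+\epsilon)\Delta\ue=-(p-2)\il\ue$ from the pointwise equation, converts Corollary~\ref{cor:OurVersionofFundamentalInequality} into a pointwise upper bound on $(\il\ue)^2$ by a combination of $a_\epsilon^2|D^2\ue|^2$ and $a_\epsilon|D^2\ue D\ue|^2$ with coefficients involving the factor $(p-1)^2/(n-1)$. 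Plugging this bound back into the identity and collecting terms, the left hand side controls $\int \eta^2 a_\epsilon^{(p-2+s)/2}|D^2\ue|^2\,dx$ with a coefficient $\kappa(n,p,s)$ which a direct calculation shows is strictly positive precisely when $s>-1-\frac{p-1}{n-1}$; this is exactly the improvement delivered by the extra factor $\frac{1}{n-1}$ in \eqref{eq:OurVersionofFundamentalInequality} compared with \eqref{eq:FundamentalInequalityinDong2019}. The estimate \eqref{eq:QuantitativeBoundforSecondDerivatives} for $u$ then follows by letting $\epsilon\to 0^+$, using the $C^{1,\alpha}_{\loc}$-convergence of $\ue$ together with the lower semicontinuity of the $W^{1,2}$-seminorm, and noting that $|D(V_s(D\ue))|^2$ is pointwise comparable to $|D\ue|^{p-2+s}|D^2\ue|^2$.
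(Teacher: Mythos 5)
Your overall plan coincides with the paper's: regularize, differentiate the PDE in direction $x_k$, test with $(|D\ue|^2+\epsilon)^{s/2}\ue_{x_k}\eta^2$, sum over $k$, combine the resulting Caccioppoli-type identity with Corollary~\ref{cor:OurVersionofFundamentalInequality}, and pass to the limit $\epsilon\to 0$.

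There is, however, a gap in the way you propose to use the inequality. You claim that substituting the non-divergence form of the regularized equation into Corollary~\ref{cor:OurVersionofFundamentalInequality} gives a \emph{pointwise upper bound} on $(\il\ue)^2$ in terms of $a_\epsilon^2|D^2\ue|^2$ and $a_\epsilon|D^2\ue D\ue|^2$. After substituting $\Delta\ue=-(p-2)\il\ue/\mu^2$, the inequality reads (up to $\epsilon$-correction terms)
\begin{equation*}
\mu^4|D^2\ue|^2 \;\geq\; 2\mu^2|D^2\ue D\ue|^2 + \Big(\tfrac{(p-1)^2}{n-1}-1\Big)(\il\ue)^2,
\end{equation*}
and the coefficient $\Phi=\tfrac{(p-1)^2}{n-1}-1$ is negative whenever $p\leq 1+\sqrt{n-1}$, which includes a large part of the range $1<p<\infty$ for every $n\geq 2$. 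In that regime you cannot rearrange this into an upper bound on $(\il\ue)^2$, so the step as stated fails. What actually works — and what the paper does in Lemma~\ref{lem:RegularizedLowerBoundforHessian} and the final coefficient computation — is to keep the inequality as a \emph{lower bound on} $|D^2\ue|^2$, insert it into the first term of the Caccioppoli identity, and separately use the elementary Cauchy--Schwarz bound $(\il\ue)^2\leq\mu^2|D^2\ue D\ue|^2$ to control the sign of the $(\il\ue)^2$ contribution. The positivity of the resulting coefficient then hinges on the identity $(p+s)+\big(\Phi+s(p-2)\big)=(p-1)\big(1+s+\tfrac{p-1}{n-1}\big)$, which yields the threshold $s>-1-\tfrac{p-1}{n-1}$ for all $1<p<\infty$. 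Two further points worth flagging: $|D(V^\epsilon_s(D\ue))|^2$ is bounded \emph{above} by a constant times $\mu^{p-2+s}|D^2\ue|^2$, but it is not two-sidedly comparable (the lower bound degenerates near $p-2+s=-2$); only the upper bound is needed, so this does not hurt the argument. And the $\epsilon$-dependent corrections to $\Phi$ must be tracked explicitly before taking $\epsilon\to 0$, as the paper does via the coefficients $a$, $b$, $c$ at the end of the proof.
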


To prove Theorem \ref{thm:Application}, we use essentially the same proof as in \cite{Dong2020}. The only significant difference is that we apply the sharper inequality \eqref{eq:OurVersionofFundamentalInequality} in Corollary \ref{cor:OurVersionofFundamentalInequality} instead of the inequality \eqref{eq:FundamentalInequalityinDong2019}. For the reader's convenience, we provide a detailed proof of Theorem \ref{thm:Application}.

Let $u$ be $p$-harmonic in $\Omega\subset\Rn$ and $U\subset\subset\Om$ be a smooth subdomain of $\Om$. 
For $\epsilon>0$ small, consider the regularized Dirichlet problem
\begin{equation} \label{eq:RegularizedDirichletProblem}
\begin{cases}
\begin{aligned}
\diverg\Big((|D\ue|^2+\epsilon)^{\frac{p-2}{2}}D\ue\Big)=0 &\quad\text{in }U; \\
\ue=u &\quad\text{on }\partial U.
\end{aligned}
\end{cases}
\end{equation}
By the standard elliptic regularity theory \cite{GilbargTrudninger}, there exists a unique solution $\ue\in C^{\infty}(U)\cap C^0(\overline{U})$. Furthermore, the family $\{\ue\}_{\epsilon}$ is uniformly bounded in $C^{1,\alpha}_{\loc}(U)$ for some $\alpha=\alpha(n,p)\in(0,1)$. That is, for any subdomain $V\subset\subset U$
there exists a constant \\
$C=C(n,p,\dist(V,\partial U),\|u\|_{L^{\infty}(U)})>0$ such that
\begin{equation} \label{eq:UniformC1alphaBoundednessofRegularizedSolutions}
\|\ue\|_{C^{1,\alpha}(V)}\leq C,
\end{equation}
see for instance \cite{Wang1994}. The Arzel\`a-Ascoli compactness theorem implies that
\begin{equation} \label{eq:PointwiseConvergenceofRegularizedDerivative}
D\ue \xrightarrow{\epsilon\to0} Du \quad\text{locally uniformly in }U,
\end{equation}
up to a subsequence. Hereafter, we always consider appropriate subsequences of the family $\{\ue\}_{\epsilon}$.

For notational convenience, we introduce the regularized version of the vector field $V_s$. Let us define $\Ve_s\colon\Rn\to\Rn$ as
$$ \Ve_s(z):=(|z|^2+\epsilon)^{\frac{p-2+s}{4}}z \quad\text{for }z\in\Rn. $$
We aim to show a bound similar to \eqref{eq:QuantitativeBoundforSecondDerivatives} for $\Ve_s(D\ue)$. 
Namely, we show that there exists a constant $C=C(n,p,s)>0$ such that
\begin{equation} \label{eq:RegularizedQuantitativeBoundforSecondDerivatives}
\int_{U}|D(\Ve_s(D\ue))|^2\phi^2dx
\leq C\int_{U}(|D\ue|^2+\epsilon)^{\frac{p+s}{2}}|D\phi|^2dx
\end{equation}
for any $\phi\in\CcU$.

The estimate \eqref{eq:QuantitativeBoundforSecondDerivatives} can be derived from \eqref{eq:RegularizedQuantitativeBoundforSecondDerivatives} as follows.
Let us fix the concentric balls $B_r\subset B_{2r}\subset\subset \Om$ and select a subdomain $U\subset\subset\Om$ such that $B_{2r}\subset\subset U$. Let $\phi\in\CcU$ be a cutoff function such that
$$ \phi=1\quad\text{in }B_r, 
\quad \spt \phi = \overline{B}_{2r}
\quad{\text{and}}\quad |D\phi|\leq \frac{10}{r}. $$
The estimate \eqref{eq:RegularizedQuantitativeBoundforSecondDerivatives} implies that
\begin{equation} \label{eq:RegularizedQuantitativeBoundforSecondDerivativesinBall}
\int_{B_r}|D(\Ve_s(D\ue))|^2dx
\leq \frac{C}{r^2}\int_{B_{2r}}(|D\ue|^2+\epsilon)^{\frac{p+s}{2}}dx
\end{equation}
for $C=C(n,p,s)>0$. If $s>-p$, we can apply \eqref{eq:UniformC1alphaBoundednessofRegularizedSolutions} to conclude that the right hand side of the above display \eqref{eq:RegularizedQuantitativeBoundforSecondDerivativesinBall} is bounded from above by a constant independent of $\epsilon$. Thus $\{\Ve_s(D\ue)\}_{\epsilon}$ is bounded in $W^{1,2}(B_r)$, and consequently we may extract a subsequence that converges weakly in $W^{1,2}(B_r)$ and strongly in $L^q(B_r)$ for any $1\leq q<\frac{2n}{n-2}$. 
By \eqref{eq:PointwiseConvergenceofRegularizedDerivative} and Dominated convergence theorem
\begin{equation}
\int_{B_{2r}}(|D\ue|^2+\epsilon)^{\frac{p+s}{2}}dx 
\xrightarrow{\epsilon\to0} 
\int_{B_{2r}}|V_s(Du)|^2dx
\end{equation}
and
\begin{equation}
\Ve_s(D\ue)\xrightarrow{\epsilon\to0}  V_s(Du) \quad\text{in }L^2(B_r).
\end{equation}
Finally, recalling that norm is lower semicontinuous with respect to the weak convergence, we can let $\epsilon\to0$ in \eqref{eq:RegularizedQuantitativeBoundforSecondDerivativesinBall} to obtain \eqref{eq:QuantitativeBoundforSecondDerivatives}. 
\subsection{Caccioppoli type estimates}
Let us henceforth denote
$$ \mu:=(|D\ue|^2+\epsilon)^{1/2} $$
and
$$ A:=I+(p-2)\frac{D\ue\otimes D\ue}{|D\ue|^2+\epsilon}, $$
where $I$ stands for the identity matrix in $\R^{n\times n}$ and $\otimes$ stands for the tensor product (or outer product) of two vectors in $\Rn$, resulting a matrix in $\R^{n\times n}$.
Note that
\begin{equation} \label{eq:EllipticityofA}
\min \{1, p-1\}I \leq A \leq \max \{1,p-1\} I
\end{equation}
uniformly in $U$.
Differentiating the PDE in \eqref{eq:RegularizedDirichletProblem} yields that the partial derivatives $\ue_{x_k}$, $k=1,\ldots,n$ solve the linear, degenerate elliptic equation
\begin{equation} \label{eq:RegularizedLinearization}
\diverg\Big(\mu^{p-2}AD\ue_{x_k}\Big)=0.
\end{equation}
In this subsection we test the equation \eqref{eq:RegularizedLinearization} with various test functions. 

The following Lemma is the basic Caccioppoli type estimate related to the equation \eqref{eq:RegularizedLinearization}. It will not be needed to prove Theorem \ref{thm:Application}. Instead, it will be employed in Section 4.

\begin{lemma} \label{lem:CaccioppoliVeryBasic}
Let $\ue$ solve \eqref{eq:RegularizedDirichletProblem}. Then we have for any $\phi\in\CcU$ and $z\in\Rn$ that
\begin{equation} \label{eq:CaccioppoliVeryBasic}
\int_U \mu^{p-2}|D^2\ue|^2\phi^2dx 
\leq C\int_U\mu^{p-2}|D\ue-z|^2|D\phi|^2dx,    
\end{equation}
where $C=C(p)>0$ is independent of $\epsilon$.
\end{lemma}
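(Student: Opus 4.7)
The plan is to test the linearized equation \eqref{eq:RegularizedLinearization} with $\varphi_k := (\ue_{x_k}-z_k)\phi^2$ for each $k=1,\ldots,n$, and then sum the resulting identities over $k$. The test functions are admissible since $z_k$ is a constant and $\phi\in\CcU$, so $\varphi_k$ has compact support in $U$ and
$$D\varphi_k = \phi^2 D\ue_{x_k} + 2\phi(\ue_{x_k}-z_k)D\phi.$$

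Plugging into the weak form of \eqref{eq:RegularizedLinearization} and summing over $k$, the ``principal'' part produces $\int_U \mu^{p-2}\phi^2 \sum_k \langle AD\ue_{x_k},D\ue_{x_k}\rangle dx$, which by the lower ellipticity bound in \eqref{eq:EllipticityofA} dominates $\min\{1,p-1\}\int_U \mu^{p-2}\phi^2|D^2\ue|^2dx$. The remaining ``cross'' term is $-2\int_U \mu^{p-2}\phi \sum_k (\ue_{x_k}-z_k)\langle AD\ue_{x_k},D\phi\rangle dx$; estimating the $k$-sum by Cauchy--Schwarz and the upper ellipticity bound in \eqref{eq:EllipticityofA} controls it by
$$2\max\{1,p-1\}\int_U \mu^{p-2}\phi|D\ue-z||D^2\ue||D\phi|dx.$$

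I would then apply Young's inequality with a small parameter $\delta>0$ to split this into a piece of the form $\delta\int_U \mu^{p-2}\phi^2|D^2\ue|^2dx$ that can be absorbed into the left-hand side, plus the desired $\delta^{-1}\int_U \mu^{p-2}|D\ue-z|^2|D\phi|^2dx$. Choosing $\delta$ small, depending only on $p$, yields \eqref{eq:CaccioppoliVeryBasic} with a constant $C=C(p)$ independent of $\epsilon$.

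This is a routine Caccioppoli argument and I do not anticipate a serious obstacle; the only points meriting care are that $z$ must enter as a \emph{constant} vector (so that $D(\ue_{x_k}-z_k)=D\ue_{x_k}$ and the linearized equation \eqref{eq:RegularizedLinearization} is preserved), and that both ellipticity constants in \eqref{eq:EllipticityofA} are $\epsilon$-independent, which is what allows $C$ to depend only on $p$.
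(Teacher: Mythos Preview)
Your argument is correct and essentially identical to the paper's: the paper tests \eqref{eq:RegularizedLinearization} with the same $\varphi=\phi^2(\ue_{x_k}-z_k)$, applies Cauchy--Schwarz (in the $A$-inner product rather than via the ellipticity bounds directly), then Young's inequality and \eqref{eq:EllipticityofA}, and finally sums over $k$. The only cosmetic differences are the order in which the $k$-sum is taken and the use of Cauchy--Schwarz for the bilinear form $\langle A\cdot,\cdot\rangle$ in place of your separate lower/upper ellipticity estimates.
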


\begin{proof}
Let $\phi\in\CcU$ and $z=(z_1,\ldots, z_n)\in\Rn$ and put
$$ \varphi=\phi^2(\ue_{x_k}-z_k). $$
We have
$$ D\varphi = 2\phi(\ue_{x_k}-z_k)D\phi + \phi^2D\ue_{x_k}, $$
and hence
\begin{align*}
\int_U\mu^{p-2}\la AD\ue_{x_k},D\ue_{x_k}\ra \phi^2dx 
&= -2\int_U\mu^{p-2}\la AD\ue_{x_k},D\phi\ra(\ue_{x_k}-z_k)\phi dx \\
&\leq 2\int_U\mu^{p-2} \sqrt{\la A D\ue_{x_k},D\ue_{x_k}\ra}\sqrt{\la A D\phi,D\phi\ra}
|\ue_{x_k}-z_k||\phi|dx.
\end{align*}
Application of Young's inequality together with the uniform ellipticity of $A$, \eqref{eq:EllipticityofA}, yields
$$ \int_U\mu^{p-2}|D\ue_{x_k}|^2\phi^2 dx 
\leq C\int_U\mu^{p-2}|D\phi|^2|\ue_{x_k}-z_k|^2dx, $$
where $C=C(p)>0$. Finally sum over $k=1,\ldots,n$ to conclude \eqref{eq:CaccioppoliVeryBasic}.
\end{proof}

The following Lemma is analogous to Lemma 3.1 in \cite{Dong2020}.

\begin{lemma} \label{lem:CaccioppoliEstimate}
Let $\ue$ solve \eqref{eq:RegularizedDirichletProblem} and let $s\in\R$. Then we have for any $\eta>0$ and for any $\phi\in\CcU$ that
\begin{equation} \label{eq:CaccioppoliEstimate}
\begin{aligned}
&\int_U |D^2\ue|^2\mu^{p-2+s}\phi^2dx 
+ (p-2+s-\eta)\int_U|D^2\ue D\ue|^2\mu^{p-4+s}\phi^2dx \\
&\quad\quad  + (s(p-2)-\eta)\int_U (\il\ue)^2 \mu^{p-6+s}\phi^2dx 
\leq \frac{C}{\eta}\int_U\mu^{p+s}|D\phi|^2dx,
\end{aligned}
\end{equation}
where $C=C(p)>0$ is independent of $\epsilon$.
\end{lemma}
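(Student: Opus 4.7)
The plan is to test the linearized equation \eqref{eq:RegularizedLinearization} with $\varphi = \mu^s \ue_{x_k} \phi^2$ and sum over $k = 1,\ldots,n$. Using the product rule together with the identity $D\mu^s = s\mu^{s-2} D^2\ue D\ue$, one obtains three contributions in the weak formulation. The key algebraic identities, which follow from the definition of $A$ and the symmetry of $D^2\ue$, are
\[
\sum_{k=1}^n \la A D\ue_{x_k}, D\ue_{x_k}\ra = |D^2\ue|^2 + (p-2)\mu^{-2}|D^2\ue D\ue|^2
\]
and, for any vector $w \in \Rn$,
\[
\sum_{k=1}^n \ue_{x_k}\la A D\ue_{x_k}, w\ra = \la A(D^2\ue D\ue), w\ra.
\]
Applying the second identity with $w = D^2\ue D\ue$ and with $w = D\phi$, the summed weak formulation reduces to
\begin{align*}
&\int_U \mu^{p-2+s}|D^2\ue|^2\phi^2\,dx + (p-2+s)\int_U \mu^{p-4+s}|D^2\ue D\ue|^2\phi^2\,dx \\
&\qquad + s(p-2)\int_U \mu^{p-6+s}(\il\ue)^2\phi^2\,dx = -2\int_U \mu^{p-2+s}\la A(D^2\ue D\ue),D\phi\ra\phi\,dx.
\end{align*}

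To control the right-hand side in a way that simultaneously produces absorption terms for both $|D^2\ue D\ue|^2$ and $(\il\ue)^2$, I would decompose $A = I + (p-2)\mu^{-2} D\ue \otimes D\ue$ and treat the two pieces separately. The isotropic piece contributes $2\mu^{p-2+s}|\la D^2\ue D\ue, D\phi\ra|\phi$, which Young's inequality (pairing $\mu^{(p-4+s)/2}|D^2\ue D\ue|\phi$ against $\mu^{(p+s)/2}|D\phi|$) bounds by $\eta\,\mu^{p-4+s}|D^2\ue D\ue|^2\phi^2 + \eta^{-1}\mu^{p+s}|D\phi|^2$. The rank-one piece gives $2|p-2|\mu^{p-4+s}|\il\ue||\la D\ue,D\phi\ra|\phi$; after applying $|\la D\ue, D\phi\ra| \leq \mu|D\phi|$, Young's inequality (pairing $\mu^{(p-6+s)/2}|\il\ue|\phi$ against $\mu^{(p+s)/2}|D\phi|$) bounds this by $\eta\,\mu^{p-6+s}(\il\ue)^2\phi^2 + C(p)\eta^{-1}\mu^{p+s}|D\phi|^2$. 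Moving both $\eta$-weighted integrals to the left-hand side yields \eqref{eq:CaccioppoliEstimate}.

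The main subtlety is in this last step. A naive bound using only the operator-norm estimate $\|A\|\leq\max(1,p-1)$ on the term $\la A(D^2\ue D\ue),D\phi\ra$ would furnish absorption for $|D^2\ue D\ue|^2$ alone and would leave the coefficient of $(\il\ue)^2$ on the left untouched. Splitting $A$ into its isotropic and rank-one pieces opens a second absorption channel that produces the $-\eta$ correction to the $(\il\ue)^2$ coefficient. This flexibility in the coefficient of $(\il\ue)^2$ is exactly what the subsequent application of Corollary \ref{cor:OurVersionofFundamentalInequality} needs in order to exchange the infinity-Laplacian term against the full Hessian-norm term and obtain the sharper exponent range \eqref{eq:RestrictionOurs}.
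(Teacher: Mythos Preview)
Your proposal is correct and follows essentially the same route as the paper: test \eqref{eq:RegularizedLinearization} with $\varphi=\mu^{s}\ue_{x_k}\phi^{2}$, sum over $k$ to reach the identity \eqref{eq:CaccioppoliIdentity}, then split $A=I+(p-2)\mu^{-2}D\ue\otimes D\ue$ and apply Young's inequality to each piece exactly as the paper does. The algebraic identities you record and the pairing in the Young step match the paper's computation line for line.
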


\begin{proof}
Let $\phi\in\CcU$  and $s\in\R$, and put
$$ \varphi=\phi^2\mu^{s}\ue_{x_k}. $$
We have
\begin{align*}
D\varphi
&=2\phi\mu^{s}\ue_{x_k}D\phi
+s\mu^{s-2}\phi^2\ue_{x_k}D^2\ue D\ue 
+\phi^2\mu^{s}D\ue_{x_k}.
\end{align*}
To ease the notation, let
$$ w:=\mu^{p-2+s}\phi^2. $$
We obtain
\begin{equation} \label{eq:CaccioppoliStartingPoint}
\begin{aligned}
&\int_U\la AD\ue_{x_k},D\ue_{x_k}\ra wdx 
+ s\int_U \mu^{-2}\la AD\ue_{x_k},D^2\ue D\ue\ra\ue_{x_k} wdx \\
&\quad = -2\int_U\la AD\ue_{x_k},D\phi\ra\ue_{x_k} \phi^{-1}wdx.
\end{aligned}    
\end{equation}
Note that
$$ \la AD\ue_{x_k},D\ue_{x_k}\ra 
= |D\ue_{x_k}|^2+(p-2)\frac{\la D\ue,D\ue_{x_k}\ra^2}{\mu^2} $$
and
\begin{align*}
\la AD\ue_{x_k},D^2\ue D\ue\ra 
= \la D\ue_{x_k},D^2\ue D\ue\ra 
+(p-2)\frac{\la D\ue_{x_k},D\ue\ra\il\ue}{\mu^2}.
\end{align*}
Summing over $k=1,\ldots,n$ yields
\begin{equation} \label{eq:CaccioppoliIdentity}
\begin{aligned}
&\int_U |D^2\ue|^2 wdx 
+ (p-2+s)\int_U\mu^{-2}|D^2\ue D\ue|^2wdx \\
&\quad + s(p-2)\int_U \mu^{-4}(\il\ue)^2 wdx = -2\int_U\la AD^2\ue D\ue,D\phi\ra \phi^{-1}wdx.
\end{aligned}
\end{equation}
The proof follows from the identity \eqref{eq:CaccioppoliIdentity} via an application of Young's inequality.
For any $\eta>0$, we can estimate the integrand on the right hand side of \eqref{eq:CaccioppoliIdentity} as follows:
\begin{align*}
   -2\la AD^2\ue D\ue,D\phi\ra \phi^{-1}w
   &\leq 2|D^2\ue D\ue||D\phi|\phi^{-1}w
   +2|p-2|\frac{|\il\ue||D\ue||D\phi|}{\mu^2}\phi^{-1}w \\
   &\leq \eta|D^2\ue D\ue|^2\mu^{-2}w
   +\frac{C}{\eta}|D\phi|^2\mu^2\phi^{-2}w \\
   &\quad +\eta(\il\ue)^2\mu^{-4}w 
   +\frac{C(p-2)^2}{\eta}|D\ue|^2|D\phi|^2\phi^{-2}w,
\end{align*}
where $C>0$ is an absolute constant. The proof is complete.
\end{proof}

The following Corollary gives, roughly speaking, an $L^2$-estimate for the Hessian $D^2\ue$ in terms of the second order derivative quantity $D^2\ue D\ue$ and the gradient $D\ue$.

\begin{corollary} \label{cor:CaccioppoliEstimateforHessian}
Let $\ue$ solve \eqref{eq:RegularizedDirichletProblem} and let $s\in\R$. Then we have for any $\phi\in\CcU$ that
\begin{equation}
\int_U|D^2\ue|^2\mu^{p-2+s}\phi^2dx
\leq 
C\Big(
\int_U|D^2\ue D\ue|^2\mu^{p-4+s}\phi^2dx
+\int_U\mu^{p+s}|D\phi|^2dx
\Big)
\end{equation}
where $C=C(p,s)>0$ is independent of $\epsilon$.
\end{corollary}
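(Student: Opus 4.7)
The plan is to apply Lemma \ref{lem:CaccioppoliEstimate} with a fixed parameter $\eta > 0$ depending only on $p$ and $s$, and then rearrange the resulting inequality, taking care with signs and absorbing the $\il\ue$ integral via Cauchy--Schwarz. The three integrals appearing on the left hand side of \eqref{eq:CaccioppoliEstimate} are all manifestly nonnegative, so once I pick, say, $\eta = 1$, I can drop from the left hand side any term whose coefficient happens to be nonnegative, and move the other two to the right hand side with positive coefficients. Concretely, this yields
\begin{align*}
\int_U |D^2\ue|^2\mu^{p-2+s}\phi^2dx &\leq \max\{0,\eta-(p-2+s)\}\int_U|D^2\ue D\ue|^2\mu^{p-4+s}\phi^2dx \\
&\quad + \max\{0,\eta-s(p-2)\}\int_U (\il\ue)^2\mu^{p-6+s}\phi^2dx \\
&\quad + \frac{C_0}{\eta}\int_U\mu^{p+s}|D\phi|^2dx,
\end{align*}
where $C_0$ is the absolute constant produced by Lemma \ref{lem:CaccioppoliEstimate}.

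The remaining task is to dispose of the middle integral, which involves $(\il\ue)^2$ and is not of the form required in the statement. For this I would use the Cauchy--Schwarz inequality pointwise,
$$ (\il\ue)^2 = \la D\ue, D^2\ue D\ue\ra^2 \leq |D\ue|^2 |D^2\ue D\ue|^2 \leq \mu^2 |D^2\ue D\ue|^2, $$
which gives the pointwise estimate $(\il\ue)^2 \mu^{p-6+s} \leq |D^2\ue D\ue|^2 \mu^{p-4+s}$. Substituting this into the above display collapses the $\il\ue$ integral into the $|D^2\ue D\ue|^2$ integral and produces exactly the form claimed in the corollary, with constant
$$ C = \max\{0,\eta-(p-2+s)\} + \max\{0,\eta-s(p-2)\} + \tfrac{C_0}{\eta}, $$
which depends only on $p$ and $s$.

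There is no real obstacle in this argument; the only mildly fiddly point is carefully handling the signs of the coefficients $(p-2+s-\eta)$ and $(s(p-2)-\eta)$ in \eqref{eq:CaccioppoliEstimate}, which may be negative for the admissible range of $s$ (most notably when $s$ is close to $-1-\frac{p-1}{n-1}$ or when $p < 2$). The use of $\max\{0,\cdot\}$ truncation above sidesteps this, and the Cauchy--Schwarz bound on $\il\ue$ is exactly what makes the $\mu^{-4}$ weight admissible. Together these reduce the corollary to pure bookkeeping from Lemma \ref{lem:CaccioppoliEstimate}.
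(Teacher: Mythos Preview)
Your proof is correct and follows essentially the same approach as the paper: apply Lemma \ref{lem:CaccioppoliEstimate} with a fixed $\eta$, move the second and third integrals to the right-hand side, and then use the Cauchy--Schwarz bound $(\il\ue)^2 \leq \mu^2|D^2\ue D\ue|^2$ to absorb the infinity-Laplacian term into the $|D^2\ue D\ue|^2$ integral. The only difference is that you spell out the sign bookkeeping via the $\max\{0,\cdot\}$ truncation, whereas the paper simply says ``move the second and third integral to the right hand side''; one minor slip is that the constant $C_0$ from Lemma \ref{lem:CaccioppoliEstimate} depends on $p$ rather than being absolute, but this does not affect the argument.
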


\begin{proof}
Move the second and third integral on the left hand side of \eqref{eq:CaccioppoliEstimate} to the right hand side of the inequality. Estimate
$$ (\il\ue)^2 \leq |D\ue|^2|D^2\ue D\ue|^2\leq \mu^2|D^2\ue D\ue|^2 $$
to conclude the proof.
\end{proof}


\subsection{Lower bound for $|D^2\ue|^2$ and proof of Theorem \ref{thm:Application}}

We begin with observing that by the smoothness of $\ue$, $|D\ue|$ is locally Lipschitz continuous, and thus, by Rademacher theorem, differentiable almost everywhere. Moreover, if $D\ue=0$ at a point where $|D\ue|$ is differentiable, we must have $D|D\ue|=0$ at that point. This allows us to define the normalized infinity Laplacian
$$ \ilN\ue :=\la \frac{D\ue}{|D\ue|},D|D\ue|\ra  $$
almost everywhere in $U$.
Note that if $D\ue\neq0$, we have 
$$ \ilN\ue =\frac{\il\ue}{|D\ue|^2}. $$
We can therefore rewrite
\begin{equation} \label{eq:ReformulationofDerivativeQuantities}
|D^2\ue D\ue|^2=|D\ue|^2|D|D\ue||^2
\quad\text{and}\quad
(\il\ue)^2=|D\ue|^4(\ilN\ue)^2
\end{equation}
almost everywhere in $U$.

\begin{lemma} \label{lem:RegularizedLowerBoundforHessian}
Let $n\geq 2$ and $\ue$ solve \eqref{eq:RegularizedDirichletProblem}. Then
\begin{equation} \label{eq:RegularizedLowerBoundforHessian}
|D^2\ue|^2
\geq 2|D|D\ue||^2+\Phi(\ilN\ue)^2
\end{equation} 
almost everywhere in $U$, where
$$ \Phi:
=\frac{(p-1)^2}{n-1}-1
-\frac{\epsilon}{\mu^2}\cdot\frac{2(p-1)(p-2)}{n-1}
+\frac{\epsilon^2}{\mu^4}\cdot\frac{(p-2)^2}{n-1}. $$
If $n=2$, equality holds in the place of inequality in \eqref{eq:RegularizedLowerBoundforHessian}.
\end{lemma}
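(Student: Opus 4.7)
The plan is to feed Corollary \ref{cor:OurVersionofFundamentalInequality} into the equation for $\ue$ and then convert everything into the notation involving $|D|D\ue||$ and $\ilN\ue$.

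First, I would apply Corollary \ref{cor:OurVersionofFundamentalInequality} to $\ue$, which is smooth by elliptic regularity. This gives pointwise in $U$ the inequality
\begin{equation*}
|D\ue|^4|D^2\ue|^2 \geq 2|D\ue|^2|D^2\ue D\ue|^2+\frac{\big(|D\ue|^2\Delta\ue-\il\ue\big)^2}{n-1}-(\il\ue)^2,
\end{equation*}
with equality when $n=2$. Next, I would expand the regularized PDE in \eqref{eq:RegularizedDirichletProblem} in non-divergence form. A direct computation gives
\begin{equation*}
\mu^{p-2}\Delta\ue+(p-2)\mu^{p-4}\il\ue=0,
\end{equation*}
so $\mu^2\Delta\ue=-(p-2)\il\ue$, which I rewrite as $|D\ue|^2\Delta\ue=-(p-2)\il\ue-\epsilon\Delta\ue$. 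Eliminating $\Delta\ue$ on the right using the same identity yields
\begin{equation*}
|D\ue|^2\Delta\ue-\il\ue=-(p-1)\il\ue+\frac{(p-2)\epsilon}{\mu^2}\il\ue,
\end{equation*}
so squaring produces
\begin{equation*}
\big(|D\ue|^2\Delta\ue-\il\ue\big)^2=(\il\ue)^2\left((p-1)^2-\frac{2(p-1)(p-2)\epsilon}{\mu^2}+\frac{(p-2)^2\epsilon^2}{\mu^4}\right).
\end{equation*}

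Now I would substitute this expression back into the inequality from Corollary \ref{cor:OurVersionofFundamentalInequality}, then divide through by $|D\ue|^4$ at every point where $D\ue\neq 0$. Using the reformulations in \eqref{eq:ReformulationofDerivativeQuantities}, namely $|D^2\ue D\ue|^2/|D\ue|^2=|D|D\ue||^2$ and $(\il\ue)^2/|D\ue|^4=(\ilN\ue)^2$, collecting the coefficient of $(\ilN\ue)^2$ gives exactly $\Phi$ as stated, and the $n=2$ equality is inherited from Corollary \ref{cor:OurVersionofFundamentalInequality}.

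Finally, I would handle the set $\{D\ue=0\}$. On this set, wherever $|D\ue|$ is differentiable we have $D|D\ue|=0$, so by the convention for $\ilN\ue$ discussed right before the lemma, both terms on the right hand side of \eqref{eq:RegularizedLowerBoundforHessian} vanish, while the left hand side is nonnegative; hence the inequality holds trivially there. Since $\ue$ is smooth, $|D\ue|$ is differentiable almost everywhere by Rademacher's theorem, so the inequality holds a.e.\ in $U$. The only nontrivial step is the algebraic manipulation after substituting the PDE identity; there is no real obstacle beyond bookkeeping with the $\epsilon/\mu^2$ corrections.
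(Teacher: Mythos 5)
Your proposal is correct and follows exactly the same route as the paper: apply Corollary \ref{cor:OurVersionofFundamentalInequality} to $\ue$, substitute the non-divergence form $\Delta\ue=-(p-2)\il\ue/\mu^2$, and divide by $|D\ue|^4$ using \eqref{eq:ReformulationofDerivativeQuantities}. The paper's proof is deliberately terse (``the proof follows now immediately \ldots by plugging \ldots in''); you have simply carried out the algebra it leaves implicit, and your treatment of the critical set $\{D\ue=0\}$ via Rademacher's theorem matches the discussion preceding the lemma.
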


\begin{proof}
By the smoothness of $\ue$, the non-divergence form of the PDE in \eqref{eq:RegularizedDirichletProblem},
\begin{equation} \label{eq:RegularizedNonDivergenceForm}
\Delta\ue+(p-2)\frac{\il\ue}{|D\ue|^2+\epsilon}=0,
\end{equation}
is equivalent with the original one.
The proof follows now immediately from Corollary \ref{cor:OurVersionofFundamentalInequality}, by plugging the non-divergence form \eqref{eq:RegularizedNonDivergenceForm} into \eqref{eq:OurVersionofFundamentalInequality}.
\end{proof}

Finally we gather together the above estimates to prove Theorem \ref{thm:Application}.

\begin{proof}[Proof of Theorem \ref{thm:Application}]
Recall that to prove Theorem \ref{thm:Application} it suffices to show that the estimate \eqref{eq:RegularizedQuantitativeBoundforSecondDerivatives}, that is,
\begin{equation*}
\int_{U}|D(\mu^{\frac{p-2+s}{2}}D\ue)|^2\phi^2dx
\leq C\int_{U}\mu^{p+s}|D\phi|^2dx,
\end{equation*}
holds for any $\phi\in C^{\infty}_0(U)$ with a constant $C=C(n,p,s)>0$ independent of $\epsilon$.
We start with 
\begin{align*}
\int_U|D(\mu^{\frac{p-2+s}{2}}D\ue)|^2\phi^2dx
&=\int_U\mu^{p-2+s}\Big(|D^2\ue|^2
+(p-2+s)\frac{|D^2\ue D\ue|^2}{\mu^2} \\
&\quad\,\, +\frac{(p-2+s)^2}{4}\frac{|D\ue|^2|D^2\ue D\ue|^2}{\mu^4}\Big)\phi^2dx \\
&\leq \Big(1+|p-2+s|
+\frac{(p-2+s)^2}{4}\Big)\int_U|D^2\ue|^2\mu^{p-2+s}\phi^2dx.
\end{align*}
We apply Corollary \ref{cor:CaccioppoliEstimateforHessian} to obtain
\begin{equation} \label{eq:RegularizedEstimateforDVsinTermsofD2uDuandDu}
\begin{aligned} 
\int_U|D(\mu^{\frac{p-2+s}{2}}D\ue)|^2\phi^2dx 
\leq C(p,s)\Big(
\int_U|D^2\ue D\ue|^2\mu^{p-4+s}\phi^2dx \\
+\int_U\mu^{p+s}|D\phi|^2dx\Big).
\end{aligned}    
\end{equation}
This estimate holds for any $s\in\R$.

In the remaining part of the proof, we estimate the first integral on the right hand side of \eqref{eq:RegularizedEstimateforDVsinTermsofD2uDuandDu} by combining Lemma \ref{lem:CaccioppoliEstimate} and Lemma \ref{lem:RegularizedLowerBoundforHessian}. We estimate the first integral of the left hand side of \eqref{eq:CaccioppoliEstimate} from below by \eqref{eq:RegularizedLowerBoundforHessian}. In addition we rewrite $|D^2\ue D\ue|^2$ and $\il\ue$ on the left hand side of \eqref{eq:CaccioppoliEstimate} according to \eqref{eq:ReformulationofDerivativeQuantities}.
We conclude that for any $\eta>0$ and for any $\phi\in\CcU$
\begin{align*}
&\int_U \Big((p-2+s-\eta)\frac{|D\ue|^2}{\mu^2}+2\Big)|D|D\ue||^2 wdx  \\
&+ \int_U\Big(\Phi+ (s(p-2)-\eta)\frac{|D\ue|^4}{\mu^4}\Big)(\ilN\ue)^2 wdx \\
&\quad \leq \frac{C}{\eta}\int_U\mu^{p+s}|D\phi|^2dx,
\end{align*}
where $C=C(p)>0$ and $w:=\mu^{p-2+s}\phi^2$.

Writing
$$ 1=\frac{|D\ue|^2}{\mu^2}+\frac{\epsilon}{\mu^2} $$
yields
\begin{align*}
&\int_U \Big((p+s-\eta)\frac{|D\ue|^2}{\mu^2}
+\frac{2\epsilon}{\mu^2}\Big)|D|D\ue||^2 wdx
+\int_U\Psi(\ilN\ue)^2 wdx \\
&\quad \leq \frac{C}{\eta}\int_U\mu^{p+s}|D\phi|^2dx,
\end{align*}
where
\begin{align*}
\Psi:=\Phi+(s(p-2)-\eta)\frac{|D\ue|^4}{\mu^4}.
\end{align*}
Observe that, if $s>-1-\frac{p-1}{n-1}$ then also $s>-p$, and we may choose $\eta=\eta(p,s)>0$ so small that we can estimate
\begin{align*}
&\eta\int_U\frac{|D\ue|^2}{\mu^2}|D|D\ue||^2wdx
+\int_U \Big((p+s-2\eta)\frac{|D\ue|^2}{\mu^2}
+\frac{2\epsilon}{\mu^2}+\Psi\Big)(\ilN\ue)^2 wdx \\
&\quad \leq \frac{C}{\eta}\int_U\mu^{p+s}|D\phi|^2dx.
\end{align*}
Now it remains to show that the condition $s>-1-\frac{p-1}{n-1}$ guarantees that we can adjust $\eta>0$ even further so that
$$ \int_U \Big((p+s-2\eta)\frac{|D\ue|^2}{\mu^2}
+\frac{2\epsilon}{\mu^2}+\Psi\Big)(\ilN\ue)^2 wdx\geq 0. $$
Note that
\begin{align*}
(p+s-2\eta)\frac{|D\ue|^2}{\mu^2}
+\frac{2\epsilon}{\mu^2}+\Psi
=a\frac{|D\ue|^4}{\mu^4}
+b\frac{\epsilon|D\ue|^2}{\mu^2}
+c\frac{\epsilon^2}{\mu^4}
\end{align*}
where
\begin{align*}
a
&=(p-1)\Big(s+1+\frac{p-1}{n-1}\Big)-3\eta,
\end{align*}
\begin{align*}
b
&=p+s +\frac{2(p-1)^2}{n-1}-\frac{2(p-1)(p-2)}{n-1}-2\eta \\
&=p+s+\frac{2(p-1)}{n-1}-2\eta
\end{align*}
and
\begin{align*}
c
&=1+\frac{(p-1)^2}{n-1}-\frac{2(p-1)(p-2)}{n-1}+\frac{(p-2)^2}{n-1} \\
&=1+\frac{1}{n-1}.
\end{align*}
We can now easily see that the restrictive condition for $s$ is indeed $s>-1-\frac{p-1}{n-1}$.
\end{proof}

\section{Proof of Theorem \ref{thm:OscillationEstimateforDVs}}
In this section we explain how to conclude the estimate \eqref{eq:OscillationEstimateforDVs} from the estimate \eqref{eq:QuantitativeBoundforSecondDerivatives} 
and the known regularity results of $p$-harmonic functions. First, note that it suffices to find $C=C(n,p,s)>0$ and $M=M(n,p,s)\geq 4$ such that
\begin{equation} \label{eq:OscillationEstimateforDVsMversion}
\int_{B_r}|D(V_s(Du))|^2dx
\leq \frac{C}{r^2}\int_{B_{Mr}}|V_s(Du)-z|^2dx
\end{equation}
for all vectors $z\in\Rn$ and all concentric balls $B_r\subset B_{Mr}\subset\subset \Omega$. Indeed, fix $B_r\subset B_{2r}\subset\subset\Om$ concentric and let $\rho:=M^{-1}r$. There exists an integer $N=N(n,M)>0$ such that
$B_r$ may be covered with a family $\{B_{\rho}(x_i)\}_{i=1}^N$, where the center points $x_i\in B_r$. Then
\begin{align*}
\int_{B_r}|D(V_s(Du))|^2dx
&\leq \sum_{i=1}^N\int_{B_{\rho}(x_i)}|D(V_s(Du))|^2dx 
\leq \sum_{i=1}^N\frac{C}{\rho^2}\int_{B_{M\rho}(x_i)}|V_s(Du)-z|^2dx \\
&\leq \frac{CNM^2}{r^2}\int_{B_{2r}}|V_s(Du)-z|^2dx.
\end{align*}
Also, note that it suffices to show \eqref{eq:OscillationEstimateforDVsMversion} for $z=\big(V_s(Du)\big)_{B_{Mr}}$.

To show \eqref{eq:OscillationEstimateforDVsMversion} for some $M\geq 4$ to be selected later, we divide the sufficiently small balls $B_r$ inside $\Om$ into two categories. By 'sufficiently', we mean that $B_{Mr}\subset\subset\Om$. In our setting, we say a ball $B_r\subset\subset\Om$ is degenerate if
\begin{equation} \label{eq:DegenracyCondition}
        \int_{B_{2r}}|V_s(Du)|^2\leq \int_{B_{Mr}}|V_s(Du)-\big(V_s(Du)\big)_{B_{Mr}}|^2dx;
\end{equation}
and non-degenerate if
\begin{equation} \label{eq:NonDegeneracyCondition}
        \int_{B_{2r}}|V_s(Du)|^2> \int_{B_{Mr}}|V_s(Du)-\big(V_s(Du)\big)_{B_{Mr}}|^2dx.
\end{equation}
In this section such balls $B_r$, $B_{2r}$ and $B_{Mr}$ are always assumed to be concentric unless otherwise stated.

Let us fix a ball $B_r$ such that $B_{Mr}\subset\subset\Om$. The ball $B_r$ must be either degenerate of non-degenerate. If $B_r$ is degenerate,  then \eqref{eq:OscillationEstimateforDVsMversion} follows directly from \eqref{eq:QuantitativeBoundforSecondDerivatives}. In this case we need to restrict $s>-1-\frac{p-1}{n-1}$. If $B_r$ is non-degenerate, 
we apply a method from the proof of Proposition 5.1 in \cite{Avelin2018}. The main consequence of the non-degeneracy condition \eqref{eq:NonDegeneracyCondition} is that we can select $M$ so large that $Du$ is approximately a nonzero constant vector in $B_{2r}$. To prove this we use the known $C^{1,\alpha}_{\loc}$-regularity of $p$-harmonic functions. We remark that in the non-degenerate case it suffices to restrict $s>-p$. If $n=2$, the degenerate and non-degenerate conditions for $s$ are the same.

The following Theorem summarizes the basic regularity of $p$-harmonic functions that we need to prove Theorem \ref{thm:OscillationEstimateforDVs}. For the proof we refer to \cite{Lewis1983} and \cite{Iwaniec1985}*{Theorem 2}, \cite{Mingione2007}*{Lemma 3.1}.
\begin{theorem} \label{thm:RegularityofpHarmonicFunctions}
Let $n\geq 2$ and $1<p<\infty$. There exists $\alpha=\alpha(n,p)\in(0,1)$ such that any $p$-harmonic function $u$ in $\Om\subset\Rn$ belongs to $C^{1,\alpha}_{\loc}(\Om)$.
Moreover, for any fixed $t>0$, there exists a constant $C=C(n,p,t)>0$ such that
\begin{equation}
\underset{B_r}{\osc} Du 
\leq C\Big(\frac{r}{R}\Big)^{\alpha}\Big(\fint_{B_R}|Du|^tdx\Big)^{1/t}
\end{equation}
holds for all concentric balls $B_r\subset B_{2r}\subset B_{R}\subset\subset\Om$.
\end{theorem}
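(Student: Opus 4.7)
The plan is to obtain both assertions from the regularization scheme of Section 3, combined with two classical ingredients: the qualitative $C^{1,\alpha}$-decay of Uhlenbeck/DiBenedetto/Lewis/Tolksdorf and a Moser-type passage from $L^\infty$ to $L^t$ bounds on the gradient. It suffices to prove the quantitative oscillation inequality for the smooth approximations $\ue$ of \eqref{eq:RegularizedDirichletProblem} with constants independent of $\epsilon$, and then pass to the limit using the local uniform convergence $D\ue\to Du$ in \eqref{eq:PointwiseConvergenceofRegularizedDerivative}, together with Fatou/dominated convergence on the right hand side.

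First I would establish a sup-bound for $|D\ue|$ in terms of its $L^t$-average. The partial derivatives $\ue_{x_k}$ solve the linearized equation \eqref{eq:RegularizedLinearization}, which after summation in $k$ and use of the uniform ellipticity \eqref{eq:EllipticityofA} shows that $\mu^2=|D\ue|^2+\epsilon$ is a subsolution of a suitable uniformly elliptic equation (with coefficients depending on $D\ue$ but with uniform ellipticity constants depending only on $p$). Testing the linearized equation against $\phi^2 \mu^{2q}\ue_{x_k}$, summing in $k$, and applying Young's inequality in the style of Lemma \ref{lem:CaccioppoliEstimate} yields a Caccioppoli inequality for $\mu^{q+1}$ for every admissible $q$; a standard Moser iteration then gives
\begin{equation*}
\sup_{B_{R/2}} \mu \;\leq\; C(n,p,t)\Bigl(\fint_{B_R}\mu^{t}\,dx\Bigr)^{1/t}
\end{equation*}
for every $t>0$, with the constant independent of $\epsilon$. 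Since $\mu\to|Du|$ locally uniformly as $\epsilon\to 0$, this yields the analogous bound for $|Du|$.

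Second, I would establish the qualitative oscillation decay
\begin{equation*}
\underset{B_r}{\osc}\, D\ue \;\leq\; C\Bigl(\frac{r}{R}\Bigr)^{\alpha}\sup_{B_{R/2}}|D\ue|,\qquad \alpha=\alpha(n,p)\in(0,1),
\end{equation*}
for concentric balls $B_r\subset B_{R/2}\subset B_R\subset\subset U$, with $C$ and $\alpha$ independent of $\epsilon$. This is the deep step and the main obstacle: one splits into a non-degenerate regime, where $|D\ue|$ is comparable to its supremum on a subball so that \eqref{eq:RegularizedLinearization} is genuinely uniformly elliptic and De Giorgi--Nash--Moser theory applies to each $\ue_{x_k}$; and a degenerate regime, where $|D\ue|$ is small compared to its supremum and one exploits that $\mu^2$ is a suitable subsolution to gain oscillation of $D\ue$ directly. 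A Campanato-type iteration over a geometric sequence of radii combines the two alternatives to yield the Hölder exponent $\alpha$ uniformly in $\epsilon$; this is precisely what is carried out in \cite{Lewis1983} and the references cited before Theorem \ref{thm:RegularityofpHarmonicFunctions}.

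Finally, chaining the two estimates with $R$ replaced by $2r$ and then by $R$ gives
\begin{equation*}
\underset{B_r}{\osc}\, D\ue
\;\leq\; C\Bigl(\frac{r}{R}\Bigr)^{\alpha}\Bigl(\fint_{B_R}\mu^{t}\,dx\Bigr)^{1/t}
\end{equation*}
with $C=C(n,p,t)$ independent of $\epsilon$. Letting $\epsilon\to 0$ the left hand side converges to $\osc_{B_r} Du$ by \eqref{eq:PointwiseConvergenceofRegularizedDerivative}, while on the right hand side $\mu^t=(|D\ue|^2+\epsilon)^{t/2}\to |Du|^t$ locally uniformly and is dominated by the uniform bound \eqref{eq:UniformC1alphaBoundednessofRegularizedSolutions}, so dominated convergence yields the claimed inequality. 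The uniform Hölder estimate for $D\ue$ then passes to $Du$ via Arzelà--Ascoli, giving $u\in C^{1,\alpha}_{\loc}(\Omega)$.
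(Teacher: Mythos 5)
The paper does not actually prove Theorem~\ref{thm:RegularityofpHarmonicFunctions}: it is a classical result, and the text simply cites \cite{Lewis1983}, \cite{Iwaniec1985}*{Theorem 2}, and \cite{Mingione2007}*{Lemma 3.1}. Your sketch is, in effect, an outline of how those references prove it — regularize, establish an $L^\infty$--$L^t$ bound for the gradient by Moser iteration, establish the oscillation decay $\osc_{B_r} Du \leq C(r/R)^\alpha\sup_{B_{R/2}}|Du|$, chain, and pass to the limit — so the route is the same modulo the level of detail; you just unpack what the citations hide.

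Two small points are worth flagging. First, the linearized equation \eqref{eq:RegularizedLinearization} is \emph{not} uniformly elliptic: the matrix $A$ has ellipticity constants depending only on $p$ by \eqref{eq:EllipticityofA}, but the full coefficient is $\mu^{p-2}A$, which degenerates (for $p>2$) or blows up (for $p<2$) where $D\ue\to 0$. What makes the Moser iteration work is that the weight $\mu^{p-2}$ propagates consistently through the test-function scheme $\varphi=\phi^2\mu^{2q}\ue_{x_k}$, not that the PDE is uniformly elliptic; calling it uniformly elliptic misdescribes the structure and obscures where the degeneracy has to be confronted. Second, Moser iteration most directly yields $\sup_{B_{R/2}}\mu\leq C(\fint_{B_R}\mu^t)^{1/t}$ for $t$ at or above a threshold (say $t\geq p$ or $t\geq 2$); descending to \emph{all} $t>0$ requires the standard interpolation-and-absorption trick over a geometric sequence of radii. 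You assert the estimate for every $t>0$ directly, which is where the theorem needs it, but a reader filling in details will need that extra step. Neither issue is a fundamental obstacle; both the paper and your sketch defer the hard oscillation-decay alternative (degenerate vs.\ non-degenerate regime) to \cite{Lewis1983}, and once that and the $L^\infty$--$L^t$ bound are granted, your chaining and limiting argument is correct.
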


The following lemma is a straightforward generalization of Lemma 5.3 in \cite{Avelin2018}.

\begin{lemma} \label{lem:ChangeofScale}
Let $\Om\subset\Rn$ and $v\in L^2_{\loc}(\Om)$ be such that
\begin{equation} \label{eq:AssumptionofChangeofScale}
   \int_{B_{mr}}|v|^2dx > \int_{B_{Mr}}|v-(v)_{B_{Mr}}|^2dx 
\end{equation}
for some concentric balls $B_{mr}\subset B_{Mr}\subset\subset\Om$, where $0<m<M<\infty$. Then for any $\kappa\in[m,M]$ we have
$$ \fint_{B_{\kappa r}}|v|^2dx
\leq 9\fint_{B_{mr}}|v|^2dx. $$
\end{lemma}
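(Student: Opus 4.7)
Write $c := (v)_{B_{Mr}}$ and $a^2 := \fint_{B_{mr}} |v|^2 \, dx$, the quantity appearing on the right of the desired estimate. The hypothesis \eqref{eq:AssumptionofChangeofScale} rewritten in terms of averages reads
\[
|B_{mr}| \, a^2 \;>\; |B_{Mr}| \fint_{B_{Mr}} |v - c|^2 \, dx,
\]
so the mean oscillation on the large ball satisfies $\fint_{B_{Mr}} |v - c|^2 \, dx < (m/M)^n \, a^2$. This is the key inequality; the rest of the argument is bookkeeping.

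First I would bound $|c|$ in terms of $a$. By the triangle inequality applied to $c = \fint_{B_{mr}} (c - v) \, dx + \fint_{B_{mr}} v \, dx$ and Jensen,
\[
|c| \;\leq\; \Bigl(\fint_{B_{mr}} |c - v|^2 \, dx\Bigr)^{1/2} + \Bigl(\fint_{B_{mr}} |v|^2 \, dx\Bigr)^{1/2},
\]
and since $B_{mr} \subset B_{Mr}$,
\[
\fint_{B_{mr}} |c - v|^2 \, dx \;\leq\; \frac{|B_{Mr}|}{|B_{mr}|} \fint_{B_{Mr}} |c - v|^2 \, dx \;<\; a^2.
\]
Hence $|c| < 2a$.

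Next, for any $\kappa \in [m, M]$, since $B_{\kappa r} \subset B_{Mr}$,
\[
\fint_{B_{\kappa r}} |v - c|^2 \, dx \;\leq\; \Bigl(\frac{M}{\kappa}\Bigr)^{\! n} \fint_{B_{Mr}} |v - c|^2 \, dx \;<\; \Bigl(\frac{m}{\kappa}\Bigr)^{\! n} a^2 \;\leq\; a^2.
\]
Finally, expand $|v|^2 = |v - c|^2 + 2 \langle v - c, c \rangle + |c|^2$ and integrate over $B_{\kappa r}$. Using Jensen on the cross term, $\fint_{B_{\kappa r}} \langle v - c, c \rangle \, dx \leq |c| \, ( \fint_{B_{\kappa r}} |v - c|^2 \, dx )^{1/2}$, so
\[
\fint_{B_{\kappa r}} |v|^2 \, dx \;\leq\; \Bigl(\,\Bigl(\fint_{B_{\kappa r}} |v - c|^2 \, dx\Bigr)^{1/2} + |c|\Bigr)^{\!2} \;<\; (a + 2a)^2 \;=\; 9 a^2,
\]
which is the claim.

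The only mildly subtle point is obtaining the constant $9$ rather than $10$: one must complete the square $A^2 + 2A|c| + |c|^2 = (A + |c|)^2$ rather than apply the wasteful inequality $(A + |c|)^2 \leq 2A^2 + 2|c|^2$. Everything else reduces to Jensen's inequality and the volume comparison $|B_{\rho r}| = \rho^n |B_r|$.
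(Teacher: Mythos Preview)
Your proof is correct and follows essentially the same route as the paper: split $v=(v-c)+c$ via Minkowski, bound $|c|$ by $(\fint_{B_{mr}}|v-c|^2)^{1/2}+(\fint_{B_{mr}}|v|^2)^{1/2}$, enlarge the oscillation integrals to $B_{Mr}$, and invoke the hypothesis to get each piece below $a$, arriving at $(a+2a)^2=9a^2$. The only cosmetic difference is that you isolate the bound $|c|<2a$ as a separate preliminary step, whereas the paper runs the three terms in a single chain of inequalities; the ingredients and the arithmetic are identical.
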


\begin{proof}
Apply Minkowski inequality and then H\"older inequality to obtain
\begin{align*}
    \Big(\fint_{B_{\kappa r}}|v|^2\Big)^{1/2}
    &\leq \Big(\fint_{B_{\kappa r}}|v-(v)_{B_{Mr}}|^2dx\Big)^{1/2} 
    + \fint_{B_{mr}}|v-(v)_{B_{Mr}}|dx
    + \fint_{B_{mr}}|v|dx \\
    &\leq \Big(\fint_{B_{\kappa r}}|v-(v)_{B_R}|^2dx\Big)^{1/2} 
    + \Big(\fint_{B_{mr}}|v-(v)_{B_{Mr}}|^2dx\Big)^{1/2}
    + \Big(\fint_{B_{mr}}|v|^2dx\Big)^{1/2}.
\end{align*}
Enlarging the integral domains in the first two items on the bottom row of the above display yields
\begin{align*}
    \Big(\fint_{B_{\kappa r}}|v|^2\Big)^{1/2}
    &\leq 2\Big(|B_{mr}|^{-1}\int_{B_{Mr}}|v-(v)_{B_{Mr}}|^2dx\Big)^{1/2}
    + \Big(\fint_{B_{mr}}|v|^2dx\Big)^{1/2}.
\end{align*}
Now the assumption \eqref{eq:AssumptionofChangeofScale} is applicable on the first item on the right hand side of the above inequality. The desired estimate follows and the proof is complete.
\end{proof}

For the proof of the following algebraic inequalities, see \cite{Hamburger1992}*{Lemma 2.1}. 

\begin{lemma} \label{lem:AlgebraicIdentity}
Let $1<p<\infty$ and $s>-p$. There exist constants $c_1=c_1(p,s)>0$ and $c_2=c_2(p,s)>0$ such that
$$ c_1\big(\epsilon+|z|^2+|w|^2\big)^{\frac{p-2+s}{2}}|z-w|^2 
\leq \big|\Ve_s(z)-\Ve_s(w)\big|^2 \leq c_2 \big(\epsilon+|z|^2+|w|^2\big)^{\frac{p-2+s}{2}}|z-w|^2 $$
for any two vectors $z,w\in\Rn$.
\end{lemma}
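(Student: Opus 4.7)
The plan is to reduce both inequalities to a scalar comparison for a one-dimensional integral along the segment joining $w$ and $z$. Writing $v:=z-w$ and $\gamma(\theta):=w+\theta v$ for $\theta\in[0,1]$, a direct computation yields
\[
D\Ve_s(\xi)=(|\xi|^2+\epsilon)^{\frac{p-2+s}{4}}\left(I+\frac{p-2+s}{2}\cdot\frac{\xi\otimes\xi}{|\xi|^2+\epsilon}\right),
\]
which is a symmetric matrix whose eigenvalues are $(|\xi|^2+\epsilon)^{(p-2+s)/4}$ (with multiplicity $n-1$, in directions orthogonal to $\xi$) and $(|\xi|^2+\epsilon)^{(p-2+s)/4}\bigl(1+\tfrac{p-2+s}{2}\tfrac{|\xi|^2}{|\xi|^2+\epsilon}\bigr)$ (along $\xi$). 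The parenthesised factor lies in the closed interval with endpoints $1$ and $(p+s)/2$, and both endpoints are strictly positive precisely when $s>-p$. This yields the two-sided Jacobian bound $c(p,s)(|\xi|^2+\epsilon)^{(p-2+s)/4}|\eta|\leq|D\Ve_s(\xi)\eta|\leq C(p,s)(|\xi|^2+\epsilon)^{(p-2+s)/4}|\eta|$ for all $\eta\in\Rn$, with constants depending only on $p$ and $s$.

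Next, from the identity $\Ve_s(z)-\Ve_s(w)=\int_0^1 D\Ve_s(\gamma(\theta))\,v\,d\theta$ the upper Jacobian bound immediately yields
\[
|\Ve_s(z)-\Ve_s(w)|\leq C(p,s)\,|v|\int_0^1(|\gamma(\theta)|^2+\epsilon)^{\frac{p-2+s}{4}}\,d\theta.
\]
For the matching lower bound I pair the identity with $v/|v|$ and use the positive definiteness of $D\Ve_s$ established above: since $\la D\Ve_s(\gamma(\theta))v,v\ra\geq c(p,s)(|\gamma(\theta)|^2+\epsilon)^{(p-2+s)/4}|v|^2$, Cauchy--Schwarz gives
\[
|\Ve_s(z)-\Ve_s(w)|\geq\frac{\la\Ve_s(z)-\Ve_s(w),v\ra}{|v|}\geq c(p,s)\,|v|\int_0^1(|\gamma(\theta)|^2+\epsilon)^{\frac{p-2+s}{4}}\,d\theta.
\]
Squaring both bounds, the lemma reduces to proving the scalar comparison
\[
\int_0^1(|\gamma(\theta)|^2+\epsilon)^{\frac{p-2+s}{4}}\,d\theta\simeq(\epsilon+|z|^2+|w|^2)^{\frac{p-2+s}{4}},
\]
with implied constants depending only on $p$ and $s$.

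When $p-2+s\geq 0$ this scalar comparison is elementary: the pointwise estimate $|\gamma(\theta)|^2\leq 2(|z|^2+|w|^2)$ gives the upper bound, while for the lower bound one uses $|v|\leq|z|+|w|$ to show that the set $\{\theta\in[0,1]:|\gamma(\theta)|\geq\tfrac12\max(|z|,|w|)\}$ has measure bounded below by an absolute constant, and integrates the increasing function $(\,\cdot\,)^{(p-2+s)/4}$ over that set. When $p-2+s<0$ the directions of difficulty reverse: the pointwise bound $|\gamma(\theta)|\leq|z|+|w|$ now yields the lower bound for free, while the upper bound is delicate because the integrand blows up where $\gamma$ passes near the origin. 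Reparametrising so that the point of $\gamma$ closest to the origin corresponds to parameter $0$ converts the integral into $\tfrac{1}{|v|}\int_a^b(u^2+d^2+\epsilon)^{(p-2+s)/4}\,du$ with $b-a=|v|$ and $d=\dist(0,\gamma)$; a one-line computation bounds this by $C\max(|v|,d,\epsilon^{1/2})^{(p-2+s)/2}$, and a brief geometric check gives $\max(|v|,d)\simeq\max(|z|,|w|)$.

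The main obstacle is precisely this upper-bound calculation in the $p-2+s<0$ regime: convergence of the one-dimensional integral requires the exponent $(p-2+s)/2$ to exceed $-1$, which is exactly the hypothesis $s>-p$. This is the only place in the argument where the precise range of $s$ enters; all other steps are algebraic.
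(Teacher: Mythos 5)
The paper does not prove this lemma itself; it simply cites \cite{Hamburger1992}*{Lemma 2.1}, so your argument is necessarily an independent one. The overall strategy --- compute the Jacobian of $\Ve_s$, bound its eigenvalues above and below by $(|\xi|^2+\epsilon)^{(p-2+s)/4}$ times constants depending only on $p,s$, integrate along the segment, pair against $v/|v|$ for the lower bound, and then reduce to the scalar comparison $\int_0^1(|\gamma(\theta)|^2+\epsilon)^{\alpha}\,d\theta\simeq(\epsilon+|z|^2+|w|^2)^{\alpha}$ with $\alpha=(p-2+s)/4$ --- is sound, and your eigenvalue analysis correctly isolates $s>-p$ as the condition for positivity of the Jacobian. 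The case $p-2+s\geq 0$ and the lower bound in the case $p-2+s<0$ are also fine as you wrote them.

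There is, however, a genuine imprecision in the upper bound for $p-2+s<0$ that a careful reader will stumble on. Your reparametrisation is announced as converting the integral exactly into $\frac{1}{|v|}\int_a^b(u^2+d^2+\epsilon)^{\alpha}\,du$ with $d=\dist(0,\gamma)$. But the identity $|\gamma(\theta)|^2=u^2+d^2$ holds only when $d$ is the distance from the origin to the \emph{line} through $z$ and $w$ and $u$ is the signed distance along that line from the foot of the perpendicular. With that reading of $d$, your final claim $\max(|v|,d)\simeq\max(|z|,|w|)$ is \emph{false}: take $z=(1,0)$ and $w=(1-\delta,0)$, so that $|v|=\delta$, $d=0$, yet $\max(|z|,|w|)=1$; no uniform constant exists, and indeed your intermediate bound $C\max(|v|,d,\epsilon^{1/2})^{2\alpha}$ blows up as $\delta,\epsilon\to0$ while the correct right-hand side stays bounded. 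To make the geometric check true you must read $d=\dist(0,[w,z])$, the distance to the \emph{segment} (which is what $\dist(0,\gamma)$ literally says). With that $d$ the check is easy: $d\leq\min(|z|,|w|)$ and $|v|\leq 2\max(|z|,|w|)$ give one direction, and if $|v|<\tfrac12\max(|z|,|w|)$ then every $|\gamma(\theta)|\geq\max(|z|,|w|)-|v|>\tfrac12\max(|z|,|w|)$, giving the other. But then the reparametrisation is no longer an identity: when the foot of the perpendicular lies outside the segment one has $|\gamma(\theta)|^2\geq u^2+d^2$ with strict inequality, not equality. This happens to be the favourable direction (since $\alpha<0$), so the argument survives, but you must say so. Concretely, replace ``converts the integral into'' by ``is bounded above by,'' observing that for $\theta_{\ast}$ the parameter of the closest point on the segment, $|\gamma(\theta)|^2\geq d^2+|v|^2(\theta-\theta_{\ast})^2$ always holds. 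With that repair the one-line computation (worst-case over a length-$|v|$ window, using $2\alpha+1>0$, i.e.\ $s>-p$) and the geometric check are both valid, and the proof closes.
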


Let us introduce the notation
$$ \lambda:
=\Big(\fint_{B_{2r}}|Du|^{p+s}dx\Big)^{\frac{1}{p+s}}
=\Big(\fint_{B_{2r}}|V_s(Du)|^2dx\Big)^{\frac{1}{p+s}}. $$
Note that if $\lambda=0$, then the desired estimate \eqref{eq:OscillationEstimateforDVs} is trivial. Hence we may assume that $\lambda>0$.

The following lemma is an adapted version of Lemma 5.5 in \cite{Avelin2018}. 
\begin{lemma} \label{lem:GradientComparability}
Let $n\geq 2$, $1<p<\infty$ and $s>-p$. Suppose that $u$ is $p$-harmonic in $\Om\subset\Rn$. Given any $\sigma>0$, there exists a constant $M=M(n,p,s,\sigma)\geq 4$ such that for any ball $B_r\subset\subset\Om$  the non-degeneracy condition \eqref{eq:NonDegeneracyCondition} implies that
\begin{equation} \label{eq:GradientComparability}
|Du-Du(x_0)|\leq \sigma\lambda \quad\text{in }B_{2r},
\end{equation}
where $x_0\in B_{2r}$ is a point such that $|Du(x_0)|=\lambda.$
\end{lemma}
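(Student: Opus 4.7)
The plan is to combine the $C^{1,\alpha}$ oscillation decay of $Du$ from Theorem \ref{thm:RegularityofpHarmonicFunctions} with the non-degeneracy condition \eqref{eq:NonDegeneracyCondition} via Lemma \ref{lem:ChangeofScale}, making the oscillation of $Du$ on $B_{2r}$ arbitrarily small compared to $\lambda$ once $M$ is chosen large. Since $s>-p$ gives $p+s>0$, Theorem \ref{thm:RegularityofpHarmonicFunctions} is applicable with the fixed exponent $t=p+s$. Assuming $M\geq 4$ and using concentric balls $B_{2r}\subset B_{4r}\subset B_{Mr}\subset\subset\Om$ (the last inclusion being implicit in the setup of this section), the theorem applied with its $r$ equal to our $2r$ and its $R$ equal to $Mr$ would yield
$$ \underset{B_{2r}}{\osc}\,Du
\leq C\Big(\frac{2}{M}\Big)^{\alpha}
\Big(\fint_{B_{Mr}}|Du|^{p+s}dx\Big)^{1/(p+s)} $$
for some $C=C(n,p,s)>0$ and $\alpha=\alpha(n,p)\in(0,1)$.

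Next I would pass from $B_{Mr}$ back to $B_{2r}$ using the non-degeneracy condition. Since $|V_s(Du)|^2=|Du|^{p+s}$, the condition \eqref{eq:NonDegeneracyCondition} is precisely the hypothesis of Lemma \ref{lem:ChangeofScale} applied to $v=V_s(Du)$ with $m=2$. Taking $\kappa=M\in[2,M]$ in that lemma would give
$$ \fint_{B_{Mr}}|Du|^{p+s}dx
\leq 9\fint_{B_{2r}}|V_s(Du)|^2dx
=9\lambda^{p+s}. $$
Substituting this into the previous display would produce
$$ \underset{B_{2r}}{\osc}\,Du
\leq C\cdot 9^{1/(p+s)}\Big(\frac{2}{M}\Big)^{\alpha}\lambda, $$
and I would then choose $M=M(n,p,s,\sigma)\geq 4$ so large that the prefactor is at most $\sigma$.

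Finally, to produce the base point $x_0$: by the $C^{1,\alpha}_{\loc}$-regularity of $u$, $|Du|$ is continuous on the compact connected set $\overline{B}_{2r}$, and since $\lambda$ is the $(p+s)$-power mean of $|Du|$ over $B_{2r}$ it lies between $\min_{\overline{B}_{2r}}|Du|$ and $\max_{\overline{B}_{2r}}|Du|$. The intermediate value theorem then provides some $x_0\in\overline{B}_{2r}$ with $|Du(x_0)|=\lambda$, and the oscillation bound gives $|Du(x)-Du(x_0)|\leq\sigma\lambda$ for all $x\in B_{2r}$. I do not anticipate a substantive obstacle; the only real insight is recognizing that \eqref{eq:NonDegeneracyCondition} matches the hypothesis of Lemma \ref{lem:ChangeofScale} for $v=V_s(Du)$, which allows replacing the integral over $B_{Mr}$ by one over $B_{2r}$ at the cost of a universal factor.
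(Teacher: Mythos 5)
Your proposal is correct and follows essentially the same route as the paper: apply Theorem \ref{thm:RegularityofpHarmonicFunctions} with $t=p+s$ on $B_{2r}\subset B_{Mr}$, convert the $B_{Mr}$-average back to $\lambda$ via Lemma \ref{lem:ChangeofScale} with $v=V_s(Du)$, $m=2$, $\kappa=M$ (using that $|V_s(Du)|^2=|Du|^{p+s}$), choose $M$ large, and produce $x_0$ via the mean-value/intermediate-value theorem for integrals. The paper words the existence of $x_0$ as ``by mean value theorem,'' while you spell out the intermediate-value argument; these are the same step.
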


\begin{proof}
By mean value theorem, we can fix a point $x_0\in B_{2r}$ such that
$|Du(x_0)|=\lambda.$
Let $x\in B_{2r}$. We apply Theorem \ref{thm:RegularityofpHarmonicFunctions} with $t=p+s>0$ to estimate
$$ |Du(x)-Du(x_0)|\leq\underset{B_{2r}}{\osc} Du
\leq C\Big(\frac{2}{M}\Big)^{\alpha}\Big(\fint_{B_{Mr}}|Du|^{p+s}dx\Big)^{\frac{1}{p+s}}, $$
where $C=C(n,p,s)>0$.
The non-degeneracy condition \eqref{eq:NonDegeneracyCondition} allows us to employ Lemma \ref{lem:ChangeofScale} with $v=V_s(Du)$ and $m=2$ to obtain
$$ \Big(\fint_{B_{Mr}}|Du|^{p+s}dx\Big)^{\frac{1}{p+s}}
\leq 9^{\frac{1}{p+s}}\lambda. $$
We can now adjust $M=M(n,p,s,\sigma)\geq 4$ such that
$C\big(\frac{2}{M}\big)^{\alpha} 9^{\frac{1}{p+s}}\leq\sigma$.  
This completes the proof.
\end{proof}

We are finally ready to complete the proof of Theorem \ref{thm:OscillationEstimateforDVs}.

\begin{proof}[Proof of Theorem \ref{thm:OscillationEstimateforDVs}]
Let $\sigma=\sigma(p,s)>0$ be a very small constant to be selected later, and accordingly let $M=M(n,p,s,\sigma)\geq 4$ be given by Lemma \ref{lem:GradientComparability}.
Fix a ball $B_r\subset\subset\Om$ such that $B_{Mr}\subset\subset\Om$. 
Recall that, in view of Theorem \ref{thm:Application}, it suffices to study the case when $B_r$ is non-degenerate \eqref{eq:NonDegeneracyCondition}. To run the computations, we consider the regularization \eqref{eq:RegularizedDirichletProblem} in a subdomain $U\subset\subset\Om$ such that $B_{Mr}\subset\subset U$. By \eqref{eq:PointwiseConvergenceofRegularizedDerivative} and Lemma \ref{lem:GradientComparability}, we may henceforth consider $0<\epsilon<\sigma\lambda^2$ so small that
\begin{equation} \label{eq:RegularizedGradientComparability}
|D\ue-Du(x_0)|\leq 2\sigma\lambda
\quad\text{and}\quad
\frac{3}{4}\lambda\leq \mu\leq \frac{5}{4}\lambda \quad\text{in }B_{2r},
\end{equation}
where $x_0\in B_{2r}$ is a point such that $|Du(x_0)|=\lambda$, and $\mu=(|D\ue|^2+\epsilon)^{1/2}$.

In what follows, the constants $C=C(n,p,s)>0$ and $c=c(p,s)>0$ may vary from line to line.
By \eqref{eq:RegularizedGradientComparability}
\begin{equation} \label{eq:RegularizedStartingEsimateforDVs}
|D(\Ve_s(D\ue))|^2\leq C\mu^{p-2+s}|D^2\ue|^2\leq C\lambda^s\mu^{p-2}|D^2\ue|^2 \quad\text{in }B_{2r}.
\end{equation}
We employ Lemma \ref{lem:CaccioppoliVeryBasic} with a cutoff function $\phi\in\CcU$ such that
$$ \phi=1\quad\text{in }B_r, 
\quad \spt \phi = \overline{B}_{2r}
\quad{\text{and}}\quad |D\phi|\leq \frac{10}{r}, $$
and use the estimates \eqref{eq:RegularizedStartingEsimateforDVs} and \eqref{eq:RegularizedGradientComparability}
to arrive at
\begin{equation} \label{eq:FirstL2EstimateofDVs}
\begin{aligned}
\int_{B_r}|D(\Ve_s(D\ue))|^2dx
& \leq \frac{C\lambda^s}{r^2}\int_{B_{2r}}\mu^{p-2}|D\ue-z|^2dx \\
& \leq \frac{C}{r^2}\int_{B_{2r}}\mu^{p-2+s}|D\ue-z|^2dx 
\end{aligned}
\end{equation}
for any $z\in\Rn$. 
In particular, since $\Ve_s\colon\Rn\to\Rn$ is bijective, we may select $z=\ze\in\Rn$ such that
\begin{equation}
\Ve_s(\ze)=\big(\Ve_s(D\ue)\big)_{B_{2r}}.
\end{equation}
Observe that, by Lemma \ref{lem:AlgebraicIdentity} and \eqref{eq:RegularizedGradientComparability},
\begin{equation} \label{eq:SmallnessinTermsofsigma}
\begin{aligned}
|\Ve_s(\ze)-\Ve_s(Du(x_0))|
&\leq \fint_{B_{2r}}|\Ve_s(D\ue)-\Ve(Du(x_0))|dx \\
&\leq c_2\fint_{B_{2r}}(\mu^2+\lambda^2)^{\frac{p-2+s}{4}}|D\ue-Du(x_0)|dx \\
&\leq c\sigma\lambda^{\frac{p+s}{2}}.
\end{aligned}    
\end{equation}
We employ the above estimate \eqref{eq:SmallnessinTermsofsigma} to estimate $|\ze|$ from above and below. 
If $p-2+s\geq0$, we have
\begin{equation*}
(1-c\sigma)\lambda^{\frac{p+s}{2}}
\leq (|\ze|^2+\epsilon)^{\frac{p-2+s}{4}}|\ze|
\leq ((1+\sigma)^{\frac{p-2+s}{4}}+c\sigma )\lambda^{\frac{p+s}{2}}.
\end{equation*}
If $p-2+s<0$, we have similarly
\begin{equation*}
((1+\sigma)^{\frac{p-2+s}{4}}-c\sigma)\lambda^{\frac{p+s}{2}}
\leq (|\ze|^2+\epsilon)^{\frac{p-2+s}{4}}|\ze|
\leq (1+c\sigma )\lambda^{\frac{p+s}{2}}.
\end{equation*}
Consequently, we may select $\sigma=\sigma(p,s)>0$ such that
$$ \frac{1}{2}\lambda^{\frac{p+s}{2}}
\leq (|\ze|^2+\epsilon)^{\frac{p-2+s}{4}}|\ze|
\leq 2\lambda^{\frac{p+s}{2}}. $$
We can now restrict $\epsilon$ so small, depending on $\lambda$, $p$ and $s$, that
\begin{equation} \label{eq:Comparabilityofze}
c^{-1}\lambda<|\ze|\leq c\lambda    
\end{equation}
for some $c=c(p,s)>0$.

We apply \eqref{eq:RegularizedGradientComparability} and \eqref{eq:Comparabilityofze}, together with Lemma \ref{lem:AlgebraicIdentity}, to estimate the integrand on the bottom row of \eqref{eq:FirstL2EstimateofDVs} with $z=\ze$ as follows;
\begin{equation} \label{eq:SecondIntegrandEstimateofDVs}
\begin{aligned}
\mu^{p-2+s}|D\ue-\ze|^2
&\leq c(\mu^2+|\ze|^2)^{\frac{p-2+s}{2}}|D\ue-\ze|^2dx \\
&\leq c|\Ve_s(D\ue)-\Ve(\ze)|^2 \quad\text{in }B_{2r}.
\end{aligned}
\end{equation}
Combination of \eqref{eq:FirstL2EstimateofDVs} and \eqref{eq:SecondIntegrandEstimateofDVs} yields that
\begin{equation} \label{eq:RegularizedOscillationEstimateforDVs}
\int_{B_r}|D(\Ve_s(D\ue))|^2dx
\leq \frac{C}{r^2}\int_{B_{2r}}|\Ve_s(D\ue)-\big(\Ve_s(D\ue)\big)_{B_{2r}}|^2dx,
\end{equation}
where $C=C(n,p,s)>0$ is independent of $\epsilon$.
Therefore, as explained in Section 3, we can let $\epsilon\to 0$ in \eqref{eq:RegularizedOscillationEstimateforDVs} to obtain
\begin{equation} 
\int_{B_r}|D(V_s(Du))|^2dx
\leq \frac{C}{r^2}\int_{B_{2r}}|V_s(Du)-\big(V_s(Du)\big)_{B_{2r}}|^2dx.
\end{equation}
Note that this implies \eqref{eq:OscillationEstimateforDVsMversion}. The proof is complete.
\end{proof}

\bibliographystyle{amsplain}
\bibliography{bibliography}

@article{Colding2012,
author = {Colding, T. H.},
doi = {10.1007/s11511-012-0086-2},
issn = {00015962},
journal = {Acta Mathematica},
number = {2},
pages = {229--263},
publisher = {Institut Mittag-Leffler},
title = {{New monotonicity formulas for Ricci curvature and applications. I}},
url = {https://doi.org/10.1007/s11511-012-0086-2},
volume = {209},
year = {2012}
}

@article{Uhlenbeck1977,
author = {Uhlenbeck, K.},
doi = {10.1007/BF02392316},
issn = {00015962},
journal = {Acta Mathematica},
number = {1},
pages = {219--240},
title = {{Regularity for a class of non-linear elliptic systems}},
volume = {138},
year = {1977}
}

@book{GilbargTrudninger,
author = {Gilbarg, D. and Trudinger, N. S.},
isbn = {9783642617980},
publisher = {Springer-Verlag Berlin Heidelberg},
series = {Classics in Mathematics},
title = {{Elliptic Partial Differential Equations of Second Order}},
year = {2001}
}

@article{DiBenedetto1983,
author = {DiBenedetto, E.},
doi = {10.1016/0362-546X(83)90061-5},
issn = {0362546X},
journal = {Nonlinear Analysis},
keywords = {Elliptic degenerate and singular equations},
number = {8},
pages = {827--850},
publisher = {Pergamon},
title = {{$C^{1+\alpha}$ local regularity of weak solutions of degenerate elliptic equations}},
volume = {7},
year = {1983}
}

@article{Uraltseva1968,
author = {Ural'tseva, N. N.},
journal = {Zapski Nauchnnykh Seminaroc POMI},
pages = {184--222},
title = {{Degenerating quasilinear systems of elliptic type}},
volume = {7},
year = {1968}
}

@article{Evans1982,
author = {Evans, L. C.},
journal = {Journal of Differential Equations},
number = {3},
pages = {356--373},
title = {{A New Proof of Local $C^{1,\alpha}$ Regularity for Solutions of Certain Degenerate Elliptic P.D.E.}},
volume = {45},
year = {1982}
}

@article{Lewis1983,
author = {Lewis, J.},
doi = {10.1512/iumj.1983.32.32058},
issn = {0022-2518},
journal = {Indiana University Mathematics Journal},
number = {6},
pages = {849--858},
title = {{Regularity of the derivatives of solutions to certain degenerate elliptic equations}},
volume = {32},
year = {1983}
}

@article{Avelin2018,
author = {Avelin, B. and Kuusi, T. and Mingione, G.},
doi = {10.1007/s00205-017-1171-7},
issn = {14320673},
journal = {Archive for Rational Mechanics and Analysis},
number = {2},
pages = {663--714},
title = {{Nonlinear Calder{\'{o}}n–Zygmund Theory in the Limiting Case}},
volume = {227},
year = {2018}
}

@article{Wang1994,
author = {Wang, L.},
journal = {Journal of Differential Equations},
pages = {341--350},
title = {{Compactness Methods for Certain Degenerate Elliptic Equations}},
year = {1994}
}

@article{Bojarski1987,
author = {Bojarski, B. and Iwaniec, T.},
doi = {10.4064/-19-1-25-38},
issn = {0137-6934},
journal = {Banach Center Publications},
number = {1},
pages = {25--38},
title = {p-harmonic equation and quasiregular mappings},
volume = {19},
year = {1987}
}

@article{Iwaniec1989,
author = {Iwaniec, T. and Manfredi, J. J.},
journal = {Revista Matematica Iberoamericana},
title = {{Regularity of p-Harmonic Functions on the Plane}},
volume = {5},
year = {1989}
}

@article{Mingione2007,
archivePrefix = {arXiv},
arxivId = {math/0609670},
author = {Mingione, G.},
doi = {10.2422/2036-2145.2007.2.01},
issn = {0391173X},
journal = {Annali della Scuola Normale - Classe di Scienze},
number = {2},
pages = {195--261},
primaryClass = {math},
title = {{The Calder{\'{o}}n-Zygmund theory for elliptic problems with measure data}},
volume = {6},
year = {2007}
}

@article{Tolksdorf1984,
author = {Tolksdorf, P.},
doi = {10.1016/0022-0396(84)90105-0},
issn = {10902732},
journal = {Journal of Differential Equations},
number = {1},
pages = {126--150},
title = {{Regularity for a more general class of quasilinear elliptic equations}},
volume = {51},
year = {1984}
}

@article{Lewis1980,
author = {Lewis, J.},
doi = {10.2307/2042958},
issn = {00029939},
journal = {Proceedings of the American Mathematical Society},
number = {2},
title = {{Smoothness of Certain Degenerate Elliptic Equations}},
url = {https://www.ams.org/journal-terms-of-use},
volume = {80},
year = {1980}
}

@article{Iwaniec1985,
author = {Iwaniec, T. and Nolder, C. A.},
issn = {00661953},
journal = {Annales Academiae Scientiarum Fennicae. Series A. I. Mathematica},
pages = {267--282},
title = {{Hardy-Littlewood inequality for quasiregular mappings in certain domains in $R^n$}},
volume = {10},
year = {1985}
}

@article{Manfredi1988,
author = {Manfredi, J. J. and Weitsman, A.},
doi = {10.1080/03605308808820556},
issn = {15324133},
journal = {Communications in Partial Differential Equations},
number = {6},
pages = {651--668},
title = {{On the Fatou Theorem For $p$-Harmonic Functions}},
volume = {13},
year = {1988}
}

@article{Lindgren2020,
author = {Lindgren, E. and Lindqvist, P.},
title = {{The Gradient Flow of Infinity-Harmonic Potentials}},
year = {2020},
archivePrefix = {arXiv},
eprint = {http://arxiv.org/abs/2006.15328},
arxivId = {2006.15328},
url = {http://arxiv.org/abs/2006.15328},
}

@article{Koch2019,
archivePrefix = {arXiv},
arxivId = {1806.01982v1},
author = {Koch, H. and Zhang, Y. R. and Zhou, Y.},
journal = {Journal de Math{\'{e}}matiques Pures et Appliqu{\'{e}}es},
pages = {457--482},
title = {{An asymptotic sharp Sobolev regularity for planar infinity harmonic functions}},
volume = {132},
year = {2019}
}

@article{Hamburger1992,
author = {Hamburger, C.},
doi = {10.1515/crll.1992.431.7},
issn = {14355345},
journal = {Journal fur die Reine und Angewandte Mathematik},
pages = {7--64},
title = {{Regularity of differential forms minimizing degenerate elliptic functionals}},
volume = {431},
year = {1992}
}

@book{Giusti2003,
author = {Giusti, Enrico},
isbn = {981-238-043-4},
publisher = {World Scientific Publishing Co. Pte. Ltd.},
title = {{Direct Methdos in the Calculus of Variations}},
year = {2003}
}

@article{Cordes1961,
author = {Cordes, H. O.},
journal = {Proceedings of Symposia in Pure Mathematics},
pages = {157--166},
title = {{Zero Order A Priori Estimates for Solutions of Ellipic Differential Equations}},
volume = {4},
year = {1961}
}

@article{Lou2008,
author = {Lou, H.},
doi = {10.1007/s11401-007-0312-y},
issn = {02529599},
journal = {Chinese Annals of Mathematics. Series B},
keywords = {Existence,Optimal control,P-Laplace equation,Singular set},
number = {5},
pages = {521--530},
title = {{On singular sets of local solutions to $p$-Laplace equations}},
volume = {29},
year = {2008}
}

@article{Damascelli2004,
author = {Damascelli, L. and Sciunzi, B.},
doi = {10.1016/j.jde.2004.05.012},
issn = {00220396},
journal = {Journal of Differential Equations},
number = {2},
pages = {483--515},
title = {{Regularity, monotonicity and symmetry of positive solutions of $m$-Laplace equations}},
volume = {206},
year = {2004}
}

@article{Dong2020,
author = {Dong, H. and Fa, P. and Zhang, Y. R. and Zhou, Y.},
journal = {Advances in Mathematics},
title = {{Hessian estimates for equations involving $p$-Laplacian via a fundamental inequality}},
volume = {370},
year = {2020}
}

@article{Gehring1973,
author = {Gehring, F .W.},
journal = {Acta Mathematica},
pages = {265--277},
title = {{The $L^p$-integrability of the partial derivatives of a quasiconformal mapping}},
volume = {130},
year = {1973}
}

\end{document}